%file agofarrell.tex
%AOF version 6-2-2019
%based on AMS article-template.tex
%-----------------------------------------------------------------------
% Beginning of article-template.tex
%-----------------------------------------------------------------------
%
%    This is a template file for proceedings articles prepared with AMS
%    author packages, for use with AMS-LaTeX.
%
%    Templates for various common text, math and figure elements are
%    given following the \end{document} line.
%
%%%%%%%%%%%%%%%%%%%%%%%%%%%%%%%%%%%%%%%%%%%%%%%%%%%%%%%%%%%%%%%%%%%%%%%%

%    Remove any commented or uncommented macros you do not use.

%    Replace amsproc by the name of the author package.
\documentclass{amsproc}

%    If you need symbols beyond the basic set, uncomment this command.
%\usepackage{amssymb}

%    If your article includes graphics, uncomment this command.
%\usepackage{graphicx}

%    If the article includes commutative diagrams, ...
%\usepackage[cmtip,all]{xy}

%    Include other referenced packages here.
\usepackage{bbm}

%    Update the information and uncomment if AMS is not the copyright
%    holder.
%\copyrightinfo{2009}{American Mathematical Society}

\newtheorem{theorem}{Theorem}[section]
\newtheorem{lemma}[theorem]{Lemma}

\theoremstyle{definition}
\newtheorem{definition}[theorem]{Definition}

\theoremstyle{remark}
\newtheorem{remark}[theorem]{Remark}

\numberwithin{equation}{section}

%AOF macros:
\newcommand\B{\mathbb{B}}
\newcommand\C{\mathbb{C}}
\newcommand\UH{\mathbb{H}}
\newcommand\N{\mathbb{N}}
\newcommand\Z{\mathbb{Z}}
\newcommand\R{\mathbb{R}}
\newcommand\cs{\textup{cs}}
\newcommand\cp{\textup{cap}}
\newcommand\diam{\textup{diam}}
\newcommand\Lip{\textup{Lip}}
\newcommand\lip{\textup{lip}}
\newcommand\VMO{\textup{VMO}}

\newcommand\BMO{\textup{BMO}}
\newcommand\side{\textup{side}}
\newcommand\spn{\textup{span}}
\newcommand\loc{\textup{loc}}
\newcommand\dist{\textup{dist}}
\newcommand\spt{\textup{supp}}
\newcommand\Vit{\mathfrak{T}}% or maybe should use V
\newcommand\Cau{\mathfrak{C}}%Cauchy transform
\newcommand\Test{\mathcal{D}}
\newcommand\dd[2]{\displaystyle\frac{\partial#1}{\partial#2}}
\newcommand\dds[2]{\frac{\partial#1}{\partial#2}}
\newcommand\ddx{\dd{}{x}}
\newcommand\ddy{\dd{}{y}}
\newcommand\ddz{\dd{}{z}}
\newcommand\ddbz{\dd{}{\bar{z}}}
\newcommand\ddbzs{\dds{}{\bar{z}}}
\newcommand\half{{\scriptstyle\frac12}}
\newcommand\dsty{\displaystyle}

\RequirePackage{bbm}
\newcommand\ONE{\mathbbm{1}}

%From scs.tex:
\def\hookto{{\ \hookrightarrow\ }}
\def\locin{\ {\buildrel{\rm loc}\over\hookrightarrow}\ }
\def\loceq{\ {\buildrel{\rm loc}\over=}\ }

\newcommand\sint{{\textstyle\int}}% for integrating spaces
% for differentiating spaces

\newcommand\beq{\begin{equation}}
\newcommand\eeq{\end{equation}}

\begin{document}

% \title[short text for running head]{full title}
\title[Boundary values]{Boundary values of holomorphic
distributions in negative Lipschitz classes}

%    Only \author and \address are required; other information is
%    optional.  Remove any unused author tags.

%    author one information
% \author[short version for running head]{name for top of paper}
\author{Anthony G. O'Farrell}
\address{Mathematics and Statistics\\NUI, Maynooth\\Co Kildare\\W23 HW31\\Ireland}
\curraddr{}
\email{anthony.ofarrell@mu.ie}
\thanks{}

%    author two information

%\subjclass[2000]{Primary }
%    The 2010 edition of the Mathematics Subject Classification is
%    now available.  If you are citing a classification from the
%    new scheme, use the following input coding instead.
\subjclass[2010]{Primary 30E25; 46F20}

\date{}

\begin{abstract}
We consider the behaviour at a boundary point of an
open subset $U\subset\C$ of distributions that
are holomorphic on $U$ and belong to what are called negative
Lipschitz classes.  The result explains the significance
for holomorphic functions of series of Wiener type involving Hausdorff
contents of dimension between $0$ and $1$. We begin with a survey about function spaces and capacities
that sets the problem in context and reviews the relevant
general theory.
\end{abstract}

\maketitle

%    Text of article.

\section{Introduction}
\subsection{Boundary values}
It may happen that all bounded holomorphic functions on an
open set $U\subset\C$ admit a \lq reasonable boundary value'
at some boundary point.  This was first noted by
Gamelin and Garnett \cite{GG}.  The condition for the
existence of such a boundary value is expressed
using a series of \lq Wiener type', and involves the Ahlfors
analytic capacity, $\gamma$. The condition is
$$ \sum_{n=1}^{\infty} 2^n \gamma(A_n\setminus U) < +\infty.$$
Here, if $b$ is the boundary point in question,
$A_n$ denotes the annulus
$$ A_n(b):= \left\{ z\in\C: 
\frac1{2^{n+1}} \le |z-b| \le \frac1{2^n} \right\}. 
$$
This condition says that in an appropriate
sense the complement of $U$ is very thin at $b$;
in particular it implies that $U$ has full area
density at $b$, i.e.
$$ \lim_{r\downarrow0}\frac{|\B(b,r)\cap U)|}{\pi r^2} =1, $$
where we denote the area of a set $E\subset\C$ by $|E|$.
When the series converges, 
it is emphatically 
\emph{not} the case that
the limit
$$ \lim_{x\to a, z\in U} f(z) $$
exists for all functions $f$ bounded and holomorphic on 
$U$ (unless all such functions extend holomorphically
to a neighbourhood of $b$).  But for each such function there is a 
(unique) value which we may call  $f(a)$, with the 
property that for some set $E\subset U$ having
full area density at the point $a$ 
$$ \lim_{x\to  a, z\in E} f(z) = f(a). $$

\subsection{Peak points}
This result is one of many about the boundary
behaviour of analytic and harmonic functions
on arbitrary open sets.  The original Wiener
series (cf. \cite{Wiener} or \cite{Gardiner}) 
involved logarithmic capacity in dimension
two, Newtonian capacity in dimension three,
and other Riesz capacities in higher dimensions,
and characterised boundary points that are
regular for the Dirichlet problem.  Later
these points were recognised as \emph{peak points}
for the space of functions
harmonic on an open set $U$ and continuous on
its closure.  The first person to use such a series
with holomorphic functions was Melnikov 
\cite[Theorem VIII.4.5]{Gamelin}. He characterised the peak points
for the uniform closure on a compact set $X\subset\C$
of the algebra of all rational functions having
poles off $X$. He used the Ahlfors capacity,
and he showed that a point $b\in X$ is a peak
point if and only if
$$ \sum_{n=1}^{\infty} 2^n \gamma(\mathring{A_n}\setminus X) < +\infty.$$
(This was used by Gamelin and Garnett
to obtain their above-quoted result.)

For a bounded open set $U\subset\C$, and a point $b\in\partial U$,
the condition 
$$ \sum_{n=1}^{\infty} 2^n \alpha({A_n}\setminus U) < +\infty,$$
where $\alpha$ denotes the so-called \emph{continuous}
analytic capacity (introduced by Dolzhenko) is necessary and
sufficient for $b$ to be a peak point for the
algebra of all continuous functions on $\overline{U}$, 
holomorphic on $U$ \cite{Gamelin}.

\subsection{Capacities}
The vague idea that 
\emph{there is a capacity for every problem}
has gathered momentum over time.  A \emph{capacity}
is a function $c$ that assigns nonnegative extended real numbers
to sets, and is nondecreasing: 
$$ E_1 \subset E_2 \implies c(E_1)\le c(E_2).$$
Keldysh \cite{Keldysh} used
Newtonian capacity to solve the problem
of stability for the Dirichlet problem.
Vitushkin used analytic capacity to solve the problem
of uniform rational approximation \cite[Chapter VIII]{Gamelin}. 
Vitushkin's theorem
is completely analogous to Keldysh's: harmonic functions
have been replaced by holomorphic functions,
and Newtonian capacity by analytic capacity.
The same switch relates Wiener's regularity criterion
and Melnikov's peak-point criterion.

In an influential little book \cite{Carleson},
Carleson explained how other capacities
(particularly kernel capacities)
could be used to solve
problems about boundary values, convergence of
Fourier series, and removable singularities,
and in an appendix (prepared by Wallin) 
he listed over a thousand articles from
Mathematical Reviews up to 1965 that involve
some combination of these ideas.

\subsection{Continuous point evaluations}In relation to $L^p$ holomorphic approximation,
the appropriate capacity is a condenser
capacity.  The groundwork on condenser capacities
and (generalized) extremal length had already
been laid down by Fuglede \cite{Fuglede}.
Hedberg \cite{Hedberg-approx} (see also \cite{Bagby})
worked out the analogue of Vitushkin's
theorem for $L^p$ approximation on compact
$X\subset\C$, and \cite{Hedberg-bpd}
proved the analogue of
Melnikov's theorem.  Hedberg's result is about
\emph{continuous point evaluations}.  To explain this
concept, consider a Banach space $F$ of \lq functions'
on some set $E\subset\C$, where each element
$f\in F$ is defined almost-everywhere on $E$
with respect to area measure $m$. Suppose $b\in\overline{E}$
and the subspace $F_b$, consisting of those $f\in F$
that extend holomorphically to some neighbourhood
of $b$, is a dense subset of $F$. Then we say that
$F$ \emph{admits a continuous point evaluation at $b$}
if there exists $\kappa>0$ such that
$$    |f(b)| \le \kappa \|f\|_F, \ \forall f\in F_b. $$
This means that the functional $f\mapsto f(b)$
has a continuous extension from $F_b$ to the whole
of $F$.  Taking the case where $F$ is the closure
$R^p(X)$ in $L^p(X,m)$ of the rational functions with poles
off a compact $X\subset\C$, Hedberg showed that
if $2<p<+\infty$, then $R^p(X)$ admits a continuous point evaluation at
$b$ if and only if
$$ \sum_{n=1}^{\infty} 2^{nq} 
\Gamma_q({A_n(b)}\setminus X) < +\infty.$$
Here $q=p/(p-1)$ is the conjugate index, and $\Gamma_q$
is a certain condenser capacity.  When $p<2$,
$R^p(X)$ never admits a continuous point evaluation
at $b$, unless $b$ is an interior point of $X$.
In the case $p=2$, Hedberg left an interesting
gap between the sharpest known necessary condition
and the sharpest known sufficient condition, 
and this gap was closed by Fernstr\"om \cite{Fernstrom1975}.   
Historically,
the existence of continuous point evaluations in 
the $L^2$ case attracted considerable attention, because
of hopes that it might provide a way to
attack the invariant subspace problem for
operators on Hilbert space, and hopes that it might
provide a way to attack
the $L^2$ rational approximation problem
\cite{Brennan1, Brennan2, Brennan3}.  

The existence problem for continuous point evaluations
at boundary points has also been studied for harmonic functions
in the Sobolev space $W^{1,2}$, and Kolsrud
\cite{Kolsrud} gave a solution in terms of Wiener
series.  

In the literature, 
continuous point evaluations are often referred
to as \emph{bounded} point evaluations.

\subsection{Continuous point derivations}
There are similar results about the possibility
that the $k$-th complex derivative $f\mapsto f^{(k)}$  
may have a continuous extension from $F_b$ to all
of $F$.  These involve the same Wiener series
as continuous point evaluations, except that
the base $2$ is replaced by $2^{k+1}$.  For instance,
the $R^p(X)$ result (also due to Hedberg) involves
the condition
$$ \sum_{n=1}^{\infty} 2^{(k+1)qn} 
\Gamma_q({A_n(b)}\setminus X) < +\infty.$$
The earliest such result was for
the uniform closure of the rationals,
and was due to Hallstrom \cite{Hallstrom}.

\subsection{Intrinsic capacities}
The present author began to formalize the pairing
of problems and capacities in his thesis \cite{OF-thesis}.
He considered the limited context of uniform
algebras on compact subsets of the plane.
To each functor $X\mapsto F(X)$ that associates
a uniform algebra to each compact $X\subset\C$,
and subject to certain coherence assumptions,
he associated a capacity 
$$\alpha(F,\cdot):\mathcal{O}\to
[0,+\infty),$$
where $\mathcal{O}$ is the topology of $\C$.
He then proved a \emph{Capacity Uniqueness
Theorem}, which stated that the map $F\mapsto \alpha(F,\cdot)$
is injective on the set of such functors,
i.e. the capacity determines the functor.
The \emph{Local Capacity Uniqueness Theorem}
states that two functors $F$ and $G$ have
$F(X)=G(X)$ for a given compact set $X$ if and only
if the capacities $\alpha(F,\cdot)$ and
$\alpha(G,\cdot)$ agree on all open subsets
of the complement of $X$.  Thus $F(X)\stackrel{?}=G(X)$
is a problem for which there are \emph{two}
capacities, not one!
Vitushkin's theorem on rational approximation
is the case when $F(X)$ is the uniform closure
of the rational functions having poles off $X$
and $G(X)$ is the algebra of all functions
continuous on $X$ and holomorphic on $\mathring{X}$.
This part of the thesis is unpublished,
mainly because the main result is essentially
equivalent to a theorem of Davie \cite{Davie},
obtained independently.  In another unpublished
chapter, the author established that the
results of Melnikov and Hallstrom extended to
all these $F(X)$, replacing the analytic capacity by
$\alpha(F,\cdot)$.  

Other work by Wang \cite{Wang} and the
author \cite{OF-spikes, OF-wrap}
established a link between equicontinuous
pointwise H\"older conditions at a boundary
point and series in which $2$
is replaced by $2^\lambda$ for a nonintegral
$\lambda>1$. For instance, H\"older conditions
of order $\alpha$ are related to the convergence
of series
such as 
$$\sum_{n=1}^\infty 2^{(1+\alpha)n}\gamma(A_n\setminus X).$$ 

Moving on from the uniform norm, the
author considered parallel questions for 
Lipschitz or H\"older norms.  Building on a result of
Dolzhenko, he established that the 
equivalent of continuous analytic capacity
is the \emph{lower $\beta$-dimensional
Hausdorff content} $M^\beta_*$, with
$\beta=\alpha+1$. (For $\beta>0$, the 
\emph{$\beta$-dimensional
Hausdorff content} $M^\beta(E)$
of a set $E\subset\R^d$ is defined to be the infimum
of the sums $\sum_{n=0}^\infty r_n^\beta$, 
taken over all countable coverings of $E$
by closed balls $(\B(a_n,r_n))_n$.  If we replace
$r_n^\beta$ by $h(r_n)$ for an increasing
function $h:[0,+\infty)\to[0,+\infty)$
we get the Hausdorff $h$-content $M_h(E)$.
The
\emph{lower $\beta$-dimensional
Hausdorff content} $M^\beta_*(E)$ 
is defined to be the supremum of $M_h(E)$,
taken over all $h$ such that $0\le h(r)\le r^\beta$
for all $r>0$, and $r^{-\beta}h(r)\to0$ as
$r\downarrow0$.)
He proved \cite{OF-Vitushkin}
the analogue of Vitushkin's theorem
for rational approximation. Later, Lord and he
\cite{OF-Lord} proved the analogue of
Hallstrom's theorem for boundary derivatives.
For the $k$-th derivative, this involved the 
series condition
$$ \sum_{n=1}^{\infty} 2^{(k+1)n} 
M^{\alpha+1}_*({A_n(b)}\setminus X) < +\infty.$$

\subsection{SCS}\label{SS:1.7}
Moving to a more general context, the author
introduced the notion of a \emph{Symmetric 
Concrete Space} $F$ on $\R^d$, and considered the relation
between problems about a given such space $F$,
in combination with an elliptic operator $L$,
and an appropriate associated capacity,
the $L$-$F$-$\cp$.  
A Symmetric Concrete Space (SCS) on $\R^d$ is a
complete locally-convex topological vector space $F$
over the field $\C$, such that
\begin{itemize}
\item $\mathcal{D}\hookto F \hookto \mathcal{D}^*$;
\item $F$ is a topological $\mathcal{D}$-module
under the usual product $\phi\cdot f$ of a test function and a 
distribution;
\item $F$ is closed under complex conjugation;
\item The affine group of $\R^d$ acts by composition
on $F$, and each compact set of affine maps
gives an equicontinuous family of composition
operators. 
\end{itemize}

Here $\mathcal{D}=C^\infty_\cs(\R^d,\C)$, is the space of
\emph{test functions} and $\mathcal{D}^*$ is its dual,
the space of distributions, $A\hookto B$
stands for \lq\lq \emph{$A\subset B$ and the inclusion map
is continuous"}.  (In fact, it is elementary that
if $A$ and $B$ are metrizable SCS, then $A\subset B$ implies
$A\hookto B$.)

A SCS is a Symmetric Concrete Banach Space (SCBS)
when it is normable and is equipped with a norm.

We shall be concerned only with the case $d=2$, and we
identify $\R^2$ with $\C$.

The various analytic capacities
are $\frac{\partial}{\partial\bar z}$-$F$-$\cp$ for particular $F$.
The author planned a book about this subject, but this
project has never been completed.  Some
extracts with useful ideas and results were published.
The most useful ideas concern localness. For SCS $F$ and $G$,
we define
$$ F_\loc:= 
\{f\in\mathcal{D}^*: \phi\cdot f\in F,\ \forall \phi\in
\mathcal{D}\}, $$
$$ F_\cs:= \{
\phi\cdot f:  f\in F \textup{ and } \phi\in
\mathcal{D}\}, $$
$$ F \locin G \iff F_\loc \hookto G_\loc, $$
$$ F \loceq G \iff F_\loc = G_\loc, $$
and observe that
$$ F \loceq F_\loc \loceq F_\cs. $$

Published results include the following:
\\(1)  A \emph{Fundamental Theorem
of Calculus for SCS} that are weakly-locally
invariant under Calderon-Zygmund operators 
\cite[Lemma 12]{OF-1-reduction}.
This says that 
$$ D\sint F \loceq \sint DF \loceq F, $$
where 
$$ DF:= \mathcal{D}+\spn\left\{\dd{f}{x_j}:1\le j\le d, f\in F \right\} $$
and 
$$ \sint F:= \left\{f\in\mathcal{D}^*: \dd{f}{x_j}\in F,\textup{ for }
1\le j\le d \right\}.$$
\\
(2)
A \emph{1-reduction principle} that allows
us to establish equivalences between
problems for different operators $L$
\cite[Theorem 1]{OF-1-reduction}.
The identity operator $\ONE:f\mapsto f$ is elliptic.
If $U$ is open, then the equation $\ONE f=0$ on $U$
just means that $U\cap\spt(f)=\emptyset$.
The idea is to reduce questions about
$L$ and some space $F$ to equivalent problems about $\ONE$
and the space $LF:=\{Lf:f\in F\}$. 
\\
(3)
A general Sobolev-type embedding theorem \cite{OF-order}
involving the concept of the \emph{order}
of an SCS, and 
\\
(4) A theorem that says that
in dimension two all SCS are essentially
(technically, weakly-locally-) T-invariant
\cite{OF-T-invariance}, i.e. invariant
under the Vitushkin localization operators
(see Section \ref{S:proofs} below). 

In 1990
the author circulated a set of notes on the
concept of SCS and the main examples. 
Some ideas from these papers
were expounded by
Tarkhanov in his book on the 
\textit{Cauchy Problem
for Solutions of Elliptic Equations} \cite[Chapter 1]{Tarkhanov}.

The general point of view raised many
particular questions, and some of these have been
solved, while other loose ends remain.

\section{The Problem}
Our objective in the present paper is to address
a loose end connected to the results
on boundary behaviour of holomorphic functions
mentioned above.  For $0<\alpha<1$, the 
$\ddbzs$-$F$-$\cp$
associated to the Lipschitz class
$\Lip\alpha$
is $M^{\alpha+1}$, and that associated to the 
little Lipschitz class $\lip\alpha$
is $M_*^{\alpha+1}$.  Kaufmann \cite{Kaufmann}  showed
that $M^1$ is the $\ddbzs$-$\BMO$-$\cp$,
the capacity associated to the space of 
functions of bounded mean oscillation,
and Verdera \cite{Verdera}
established that $M^1_*$
is the $\ddbzs$-$\VMO$-$\cp$, the capacity
associated to the space of functions of vanishing
mean oscillation.  Verdera proved the Vitushkin
theorem for VMO.

The question is, \emph{what do $M^\beta$ and $M_*^\beta$
have to do with the boundary behaviour of analytic functions when
$0<\beta<1$?}.  What is the significance of the
condition
$$ \sum_{n=1}^{\infty} 2^n M^\beta({A_n}\setminus U) < +\infty,$$
when $0<\beta<1$, where 
$U$ is a bounded open subset of $\C$ and $b\in\partial{U}$?

We are considering a local problem, and it is worth noting
that there are several different meanings commonly attached
to the global Lipschitz classes, and the little
Lipschitz classes.  For $0<\alpha<1$, we define
$\Lip\alpha(\R^d)$ to be the space of bounded
functions $f:\R^d\to\C$ that satisfy a Lipschitz-alpha condition:
$$ |f(x)-f(y)| \le \kappa_f|x-y|^\alpha, \ \forall x,y\in\R^d.
$$
We would obtain a locally-equivalent Banach SCS
if we omit the word \lq bounded'.  We would
also obtain a locally-equivalent SCBS if we
just require the Lipschitz condition for
$|x-y|\le1$.  Another locally-equivalent space
is obtained by requiring the Lipschitz condition
with repect to the spherical metric (associated to the
stereographic projection $\mathbb{S}^d\to\R^d$). 
We shall shortly meet another locally-equivalent space,
defined in terms of the Poisson transform.
It makes no difference for our problem which
of these versions is used, and we can exploit this
fact by choosing whatever version is easiest
to use in each context.
For this paper, we define $\lip\alpha$ to be the closure
of $\Test$ in $\Lip\alpha$.  This space is locally-equivalent
to the space of functions that have restriction in
$\lip(\alpha,X)$ for each compact $X\subset\R^d$, but
it has an additional property \lq near $\infty$\rq,
irrelevant for our purposes.

\section{Results}
\subsection{The spaces $T_s$ and $C_s$}
The answer to the problem will not surprise anyone who
has studied the paper \cite{OF-1-reduction}, but may be
regarded as rather strange by others.

The first step in trying to identify the $L$-$F$-$\cp$
for given $L$ and $F$ is based on the principle
that the compact sets $X\subset\R^d$ that have
$(L$-$F$-$\cp)(X)=0$
should be the \emph{sets of removable singularities}
for solutions of $Lf=0$ of class $F$.  This means that
 $(L$-$F$-$\cp)(X)=0$ should be the necessary and sufficient
condition that the restriction map
$$ \{f\in F: Lf=0 \textup{ on }U\}
\to \{f\in F: Lf=0 \textup{ on }U\setminus X\}
$$
be surjective for each open set $U\subset\R^d$.

In \cite[p.140]{OF-1-reduction} it was established (as a
special case of the 1-reduction principle) that
for nonintegral $\beta$ 
the set function $M^\beta$ is zero on the
sets of removable singularities for holomorphic
functions of a Lipschitz class, 
but when $0<\beta<1$ this is a
\emph{negative Lipschitz class}, there denoted $T_{\beta-1}$.

The negative Lipschitz classes can be described in
a number of equivalent ways. In informal terms,
the basic idea is that
the $T_s$ for $s\in\R$ form a one-dimensional
\lq scale' of spaces of distributions,
i.e. a family of spaces totally-ordered under local
inclusion. When $0<s<1$,
$T_s$ is locally-equal to $\Lip s$.
Differentiation takes $T_s$ down to $T_{s-1}$,
and $DT_s$ is locally-equivalent to $T_{s-1}$.
Thus if $s<0$ and $k\in\N$ has $s+k>0$, then
$T_s$ is locally-equal to  
$D^k \Lip(s+k)$.
The elements of $T_s$ having compact support may also
be characterised by the growth of the 
Poisson transform as we approach the plane
from the upper half of 
$\R^3$, or by the growth of the convolution with the
heat kernel.  This idea originated in the work
of Littlewood and Paley and was fully developed
by Taibleson \cite[Chapter 5]{Stein}. The 
Poisson kernel is
$$ P_t(z):= \frac{t}{\pi(t^2+|z|^2)^{\frac32}}, \ (t>0, z\in\C). $$
It is real-analytic in $z$ and $t$, and is
harmonic in $(z,t)$ in the upper half-space
$$ \UH^3:= \C\times(0,+\infty). $$
For a distribution $f\in\mathcal{E}^*:= (C^\infty(\C,\C))^*$
having compact support, the Poisson transform
of $f$ is the convolution
$$ F(z,t):= (P_t*f)(z) $$
where $P_t*f$ denotes the convolution on $\C=\R^2$.
For $s<0$ we say (following \cite{OF-1-reduction})
that $f$ belongs to the \lq negative Lipschitz
space' $T_s$ if 
$$ \|f\|_s:= \sup\{ t^{-s}|F(z,t)|: z\in\C, t>0 \}<+\infty,$$
and belongs to the \lq small negative Lipschitz space
$C_s$ if, in addition,
$$ \lim_{t\downarrow0} t^{-s} \sup\{|F(z,t)|:z\in\C\} = 0. $$

For $s\ge0$, we define $T_s$ and $C_s$ 
by requiring that
for $f\in\mathcal{E}^*$, 
$f\in T_s$ (respectively $C_s$) if and only if all $k$-th order partial
derivatives of $f$ belong to $T_{s-k}$
(respectively $C_{s-k}$)
for each (or, equivalently, for some) integer $k>s$.

The Riesz transforms\footnote{%
Strictly speaking the order $t$
Riesz transform is the operator
$(-\Delta)^{-t/2}$, 
which for $t>0$ is convolution with $c_t|z|^{t-d}$,
for a certain constant $c_t$  
\cite[p 117]{Stein}.
}, convolution with 
$|z|^{t-d}$, map $T_s$ locally into $T_{s+t}$, so behave
like \lq fractional integrals'.

The scale corresponding to the little Lipschitz
class $C_s$ may be described as the closure
of the space 
$\mathcal{D}$ in
$T_s$.  

Delicate questions arise at integral values $s$,
and we shall not consider such $s$ in this paper.

\subsection{Statements}
For an open set $U\subset\C$,  and $s\in\R$, let
$$ A^s(U):= \{f\in C_s: f\textup{ is holomorphic on }U\}, $$
and
$$ B^s(U):= \{f\in T_s: f\textup{ is holomorphic on }U\}. $$

We are interested in the range $-1<s<0$, and for such
$s$ the elements of $A^s(U)$ and $B^s(U)$ are distributions
on $\C$ that may fail to be representable by integration
against a locally-$L^1$ function, so the definition
of continuous point evaluation given above does not
apply.  However, we can make a straightforward adjustment.
We shall prove the following lemma:
\begin{lemma}\label{L:1}
For each $s\in\R$, each open set $U\subset\C$ and each $b\in\C$,
the set $\{f\in A^s(U): f\textup{ is holomorphic on some neighbourhood
of }b\}$ is dense in $A^s(U)$.
\end{lemma}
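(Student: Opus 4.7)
The plan is standard Vitushkin localization. For each $\delta>0$, pick $\psi_\delta\in\Test$ with $\psi_\delta\equiv 1$ on $\B(b,\delta)$, $\spt\psi_\delta\subset \B(b,2\delta)$, and $|\ddbzs\psi_\delta|\lesssim \delta^{-1}$. Let $T_{\psi_\delta}$ denote the Vitushkin localization operator
\[
T_\phi g\;:=\;\phi\, g\;-\;\Cau(g\,\ddbzs\phi),
\]
where $\Cau$ is the Cauchy transform (convolution with $-1/(\pi z)$). A short formal computation gives the defining identity $\ddbz(T_\phi g)=\phi\,\ddbz g$, and $T_\phi g$ is holomorphic off $\spt\phi$. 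By item~(4) of Section~\ref{SS:1.7}, $T_{\psi_\delta}$ acts continuously on $T_s$ and $C_s$, so the candidate approximants
\[
f_\delta\;:=\;f\;-\;T_{\psi_\delta}f
\]
lie in $C_s$.

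The easy half is checking that each $f_\delta$ lies in $A^s(U)$ and is holomorphic on a neighbourhood of $b$. Indeed, $\ddbz f_\delta=(1-\psi_\delta)\,\ddbz f$, and this distribution vanishes on $U$ (because $\ddbz f=0$ there, as $f\in A^s(U)$) and also on $\B(b,\delta)$ (because $1-\psi_\delta$ vanishes there). Hence $f_\delta$ is holomorphic on the open set $U\cup \B(b,\delta)$, which gives exactly what is required.

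The substantive step is to show $\|T_{\psi_\delta}f\|_s\to 0$ as $\delta\downarrow 0$. One exploits that $C_s$ is by definition the $T_s$-closure of $\Test$: given $\epsilon>0$, pick $g\in\Test$ with $\|f-g\|_s<\epsilon$ and split
\[
T_{\psi_\delta}f\;=\;T_{\psi_\delta}(f-g)\;+\;T_{\psi_\delta}g.
\]
For the smooth piece $T_{\psi_\delta}g$, the explicit form $\psi_\delta g-\Cau(g\,\ddbzs\psi_\delta)$ combined with the shrinking support and the boundedness of $g$ gives a direct estimate of the Poisson transform and hence $\|T_{\psi_\delta}g\|_s\to 0$ as $\delta\downarrow 0$. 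For the first piece one needs a uniform bound $\|T_{\psi_\delta}\|_{T_s\to T_s}\le K$ with $K$ independent of $\delta$, obtained by a rescaling argument: conjugation by the dilation $z\mapsto b+\delta z$ reduces $T_{\psi_\delta}$ to $T_{\psi_1}$ for a fixed bump $\psi_1$, and the $\delta$-scaling exponents on the $T_s$-norm (inherited from the homogeneity of the Poisson kernel $P_t$) cancel the $\delta$-scaling of the operator.

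The main obstacle is precisely this last point: producing the uniform-in-$\delta$ operator norm bound on $T_{\psi_\delta}$. Once the T-invariance of item~(4) is combined with the correct bookkeeping of dilation weights, everything else is formal, and letting $\delta\downarrow 0$ gives $\|f-f_\delta\|_s\to 0$ with $f_\delta$ in the prescribed dense subset for every $\delta$.
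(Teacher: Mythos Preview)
Your proposal is correct and follows essentially the same approach as the paper: subtract the Vitushkin localization $T_{\psi_\delta}f$ to produce approximants holomorphic near $b$, then show $\|T_{\psi_\delta}f\|_s\to 0$ using the uniform operator bound on $T_{\psi_\delta}$ together with the ``smallness'' of $C_s$ elements on small discs. The paper packages the convergence step slightly differently---it first cuts $f$ to compact support and then invokes Lemma~\ref{L:3} (local $T_s$-norm of a $C_s$ element is small on small discs) combined with the strong module property (Lemma~\ref{L:4}) in place of your density-of-$\Test$ plus rescaling argument---but the content is the same.
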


Here, when we say that the distribution $f$ on $\C$
is holomorphic on an open set $V$, we mean that its
distributional $\bar\partial$-derivative has
support off $V$, i.e.
$$ \left\langle \phi, \frac{\partial f}{\partial\bar z} 
\right\rangle := 
-\left\langle \frac{\partial\phi}{\partial\bar z}, 
f\right\rangle =0 
$$
whenever the test function $\phi$ has support in $V$.
Recall that by Weyl's Lemma this means that the
restriction $f|V$ is represented by an ordinary
holomorphic function, so that it and all its derivatives
have well-defined values throughout $V$.

Let us denote 
$$ A^s_b(U):= 
\{f\in A^s(U): f\textup{ is holomorphic on some neighbourhood
of }b\}.$$

\begin{definition}
We say that $A^s(U)$ admits a continuous point evaluation
at a point $b\in\C$ if the functional $f\mapsto f(b)$
extends continuously from $A^s_b(U)$ to the whole of $A^s(U)$.
\end{definition}

Our main result is this:
\begin{theorem}\label{T:1}
Let $0<\beta<1$ and $s=\beta-1$. 
Let $U\subset\C$ be a bounded open set, and $b\in\partial{U}$.
Then $A^s(U)$ admits a
continuous point evaluation at $b$ if and only if 
$$ \sum_{n=1}^{\infty} 2^n M_*^\beta({A_n}\setminus U) < +\infty.$$
\end{theorem}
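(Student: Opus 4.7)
The plan is to run the Vitushkin localization scheme on the negative Lipschitz scale $C_s$, $s=\beta-1\in(-1,0)$. By the dimension-two T-invariance theorem of \cite{OF-T-invariance}, the Vitushkin operators
$$
T_\phi g(z) = \phi(z)g(z) - \frac{1}{\pi}\left\langle g,\,\frac{\ddbzs\phi(w)}{w-z}\right\rangle
$$
act uniformly boundedly on $C_s$. Fix a smooth dyadic partition of unity $\{\phi_n\}_{n\ge 1}$ near $b$ with $\spt\phi_n$ contained in a fixed doubling $\tilde A_n$ of $A_n(b)$, $\|\nabla\phi_n\|_\infty\le C\,2^n$, and $\sum_n\phi_n=1$ on a punctured neighbourhood of $b$. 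For $f\in A^s_b(U)$ (dense in $A^s(U)$ by Lemma \ref{L:1}) set $f_n:=T_{\phi_n}f$. Then $\ddbzs f_n=\phi_n\,\ddbzs f$ is supported in $\tilde A_n\setminus U$, each $f_n$ is holomorphic in a neighbourhood of $b$, and $\|f_n\|_{C_s}\le C\,\|f\|_{C_s}$ uniformly in $n$.

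The central analytic input is the annular estimate
$$
|f_n(b)| \le C\,2^n\,M_*^\beta(\tilde A_n\setminus U)\,\|f\|_{C_s},
$$
the scale-$s$ analogue of Melnikov's bound. Since $f_n$ is holomorphic off $\tilde A_n\setminus U$, the Cauchy representation realises $f_n(b)$ as the pairing of $\ddbzs f_n$ against the smooth function $(w-b)^{-1}$, whose modulus is of order $2^n$ on the doubled annulus; the remaining factor $M_*^\beta(\tilde A_n\setminus U)$ is the $\ddbzs$-$C_s$-capacity of the relevant compact, obtained by transporting the identification $\ddbzs$-$\lip\alpha$-$\cp=M_*^{\alpha+1}$ from positive $\alpha$ to negative $s$ via the 1-reduction principle \cite[Theorem 1]{OF-1-reduction} and the Fundamental Theorem of Calculus for SCS. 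Summing over $n$ and adding the trivial contribution of a cutoff supported away from $b$, one obtains $|f(b)|\le K\,\|f\|_{C_s}$ whenever $\sum_n 2^n M_*^\beta(A_n\setminus U)<\infty$, and Lemma \ref{L:1} extends the bound to all of $A^s(U)$.

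For the converse, assume the CPE exists with constant $\kappa$. For each $n$, use the definition of $M_*^\beta$ as a supremum of Hausdorff $h$-contents to select a distribution $g_n$ with $\ddbzs g_n$ supported in $A_n\setminus U$, $\|g_n\|_{C_s}\le 1$, and $g_n(b)\ge c\,2^n M_*^\beta(A_n\setminus U)$, after a phase normalization that makes $g_n(b)$ positive. Because the supports $\spt\ddbzs g_n$ lie in pairwise disjoint dyadic annuli around $b$, an almost-orthogonality estimate on the Poisson transform defining $C_s$ gives $\bigl\|\sum_{n\le N} g_n\bigr\|_{C_s}\le C$ independently of $N$; each partial sum lies in $A^s_b(U)$ since it is holomorphic in a neighbourhood of $b$. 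Applying CPE to the partial sums and letting $N\to\infty$ yields $\sum_n 2^n M_*^\beta(A_n\setminus U)\le C\kappa/c$.

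The principal obstacle is this almost-orthogonality step: proving that $\bigl\|\sum_n g_n\bigr\|_{C_s}$ is uniformly bounded when the supports $\spt\ddbzs g_n$ live in pairwise disjoint dyadic annuli of scale $2^{-n}$ around $b$. In the norm $\sup_{t>0}\,t^{1-\beta}\,\sup_z |P_t\ast g|$, the scale $t\sim 2^{-m}$ should be dominated by the single piece $g_m$, while the tails from $n\ne m$ decay geometrically in a Littlewood--Paley fashion; writing this estimate out cleanly, alongside the negative Lipschitz version of the capacity identification $\ddbzs$-$C_{\beta-1}$-$\cp=M_*^\beta$, is the most delicate part of the argument.
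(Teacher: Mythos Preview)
Your overall architecture matches the paper's: Vitushkin localisation plus the annular estimate for sufficiency, Frostman measures plus Cauchy transforms and an almost-orthogonality bound for necessity. The necessity half is essentially what the paper does; your phase normalisation in place of the paper's sector reduction is a harmless variant, and the paper carries weights $\lambda_n$ with $\lambda_n 2^n M_*^\beta(E_n)\le 1$ that you omit, but the Poisson-transform computation (splitting $n$ relative to the scale $t\sim 2^{-m}$ and using growth~$\beta$ for the near-diagonal pieces) goes through either way.

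The genuine gap is in the sufficiency direction, specifically in how you obtain the annular estimate
\[
|f_n(b)| \le C\,2^n\,M_*^\beta(\tilde A_n\setminus U)\,\|f\|_{C_s}.
\]
You attribute the factor $M_*^\beta$ to ``the $\partial_{\bar z}$-$C_s$-capacity of the relevant compact, obtained by transporting the identification $\partial_{\bar z}$-$\lip\alpha$-$\cp=M_*^{\alpha+1}$ \ldots\ via the 1-reduction principle''. But capacity identifications and 1-reduction are statements about \emph{null sets} and about \emph{extremal} functions; they do not, by themselves, bound the pairing $\bigl\langle (w-b)^{-1},\,\partial_{\bar z} f_n\bigr\rangle$ for an \emph{arbitrary} $f_n\in C_s$ with $\partial_{\bar z} f_n$ supported on a given compact. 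What is actually needed is a quantitative estimate of the form
\[
|\langle\phi,g\rangle| \le K\,N_3(\phi)\,\|g\|_s\,M_*^{s+2}\bigl(\spt(\phi\cdot g)\bigr)
\]
for all $\phi\in\mathcal{D}$ and $g\in C_s$. The paper proves this directly (Lemma~5.9/5.10): first a scaling argument gives the bound with $r^{s+2}$ in place of the content when $\spt(\phi g)\subset\B(0,r)$, and then a fairly intricate partition-of-unity construction (Lemma~5.8) on an arbitrary dyadic cover upgrades this to the Hausdorff content. This estimate, not the almost-orthogonality, is where most of the work lies; your appeal to 1-reduction does not substitute for it.
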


\subsection{Weak-star continuous evaluations}
We can also give a result about the big Lipschitz class
$B^s(U)$.  We cannot replace $A^s(U)$ by $B^s(U)$ in Lemma \ref{L:1}
as it stands. However, the $T_s$  spaces are dual spaces,
and so have a weak-star topology (see Subsection
\ref{SS:pf-L-2} for details), and restricting this topology
gives us a second useful topology on $B^s(U)$.
We have the following:
\begin{lemma}\label{L:2}
For each $s\in\R$, each open set $U\subset\C$ and each $b\in\C$,
The set $\{f\in B^s(U): f\textup{ is holomorphic on some neighbourhood
of }b\}$ is weak-star dense in $B^s(U)$.
\end{lemma}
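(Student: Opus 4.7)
The natural approach is to mimic the argument for Lemma~\ref{L:1}, substituting weak-star convergence in $T_s$ for norm convergence in $C_s$. For each $\epsilon>0$, choose a bump $\phi_\epsilon\in\Test$ with $\phi_\epsilon\equiv1$ on $\B(b,\epsilon/2)$ and $\spt\phi_\epsilon\subset\B(b,\epsilon)$, and let $T_\phi$ denote the Vitushkin localization operator, characterised by $\ddbzs(T_\phi g)=\phi\cdot\ddbzs g$. Given $f\in B^s(U)$, set
$$ f_\epsilon := T_{1-\phi_\epsilon} f. $$
Then $\ddbzs f_\epsilon=(1-\phi_\epsilon)\ddbzs f$ vanishes on $U$ (where $\ddbzs f=0$) and on $\B(b,\epsilon/2)$ (where $1-\phi_\epsilon=0$), so $f_\epsilon$ is holomorphic on a neighbourhood of $b$ as well as on $U$. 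The weakly-local T-invariance of $T_s$ (result~(4) of Subsection~\ref{SS:1.7}) ensures $f_\epsilon\in T_s$, with $T_s$-norm bounded uniformly in $\epsilon$; hence the $f_\epsilon$ lie in the subset whose weak-star density we seek.

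It then suffices to prove $T_{\phi_\epsilon}f\to 0$ in the weak-star topology of $T_s$ as $\epsilon\downarrow0$. Uniform boundedness of the family $(T_{\phi_\epsilon}f)_\epsilon$ in $T_s$ reduces the problem to testing the pairing against a dense subset of the predual of $T_s$, whose concrete description I would take from Subsection~\ref{SS:pf-L-2}. For a smooth, compactly supported predual element $g$, I would decompose $T_{\phi_\epsilon}f$ as $\phi_\epsilon f$ plus the Cauchy-kernel convolution of $f\cdot\ddbzs\phi_\epsilon$, and split $g$ by a partition of unity into a piece $g_1$ supported away from $b$ and a piece $g_2$ supported in a small neighbourhood of $b$. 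On the first piece, $T_{\phi_\epsilon}f$ is holomorphic on the support of $g_1$ for small $\epsilon$, so Cauchy-kernel and smoothness estimates give $\langle T_{\phi_\epsilon}f,g_1\rangle\to0$; on the second, the Poisson-transform growth bound $|P_t*f|\le\|f\|_{T_s}\,t^s$ together with the regularity of $g_2$ near $b$ should deliver the required decay.

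The main obstacle will be controlling the pairing $\langle T_{\phi_\epsilon}f,g_2\rangle$. Because the Cauchy correction in $T_\phi$ is not supported near $b$, naive support-shrinking is insufficient; the decay has to be extracted by balancing the scale $\epsilon$ of the bump against the regularity of $g_2$ near $b$, exploiting the negative exponent $s<0$ in the Poisson-transform bound. Once the predual is identified concretely in Subsection~\ref{SS:pf-L-2}, this balance should parallel the norm-convergence step in the proof of Lemma~\ref{L:1}, with pairing bookkeeping taking the place of direct $T_s$-norm control.
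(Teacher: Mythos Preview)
Your overall plan---localize with $\Vit_{\phi_\epsilon}$ and then argue that $\Vit_{\phi_\epsilon}f\to 0$ weak-star---matches the paper's, but the convergence step is handled quite differently, and the part you flag as ``the main obstacle'' is precisely what the paper's route avoids.

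The paper never performs a near/far decomposition of a predual element. Instead, after describing the predual of $\Lip\alpha$ via Sherbert's representation (measures on $\C\times\C$ off the diagonal), it observes that the span of the difference-quotient functionals $Q(z,w)$---and hence of point evaluations---is dense in the predual. Consequently, for $0<s<1$ a \emph{bounded} sequence in $T_s\loceq\Lip s$ is weak-star null iff it converges \emph{pointwise} to zero. Since $\Vit_{\phi_n}f$ is already known to be bounded in $\Lip s$ and to converge uniformly to zero, weak-star convergence is immediate. For $-1<s<0$ the paper lifts by the Cauchy transform: it suffices that $\Cau\,\Vit_{\phi_n}f\to0$ weak-star in $T_{s+1}$, and with $g=\Cau f$ this is $\Cau^2(\phi_n\cdot\partial_{\bar z}^2 g)$, the order-two Vitushkin localization, which is bounded on $\Lip(s+1)$ and on $C^0$ and again tends to zero pointwise. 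So the whole argument reduces to ``bounded $+$ pointwise''.

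Your direct route can in principle be pushed through, but two points need tightening. First, in your ``far'' step, holomorphy of $\Vit_{\phi_\epsilon}f$ on $\spt g_1$ does not by itself force $\langle \Vit_{\phi_\epsilon}f,g_1\rangle\to0$; what you actually need (and what is true) is that $\Vit_{\phi_\epsilon}f\to0$ locally uniformly away from $b$. Second, your ``near'' step cannot be done with raw Poisson-transform growth alone: you need a quantitative gain from the shrinking support, of the type in Lemma~\ref{L:extra-d} (the factor $r^{s+2}$), and you must balance it against the loss $N_3(\partial_{\bar z}\phi_\epsilon)\sim\epsilon^{-1}$. That balance yields $\epsilon^{s+1}$, which works only for $s>-1$; for smaller $s$ you would have to iterate the Cauchy transform, effectively rediscovering the paper's lift. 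The paper's approach buys a one-line weak-star criterion in place of all of this bookkeeping.
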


Denoting 
$$ B^s_b(U):= 
\{f\in B^s(U): f\textup{ is holomorphic on some neighbourhood
of }b\},$$
we can then give the following definition:

\begin{definition}
We say that $B^s(U)$ admits a weak-star continuous point evaluation
at a point $b\in\C$ if the functional $f\mapsto f(b)$
extends weak-star continuously from $B^s_b(U)$ to the whole of $B^s(U)$.
\end{definition}

Our result for $B^s(U)$ is this:

\begin{theorem}\label{T:2}
Let $0<\beta<1$ and $s=\beta-1$. 
Let $U\subset\C$ be a bounded open set, and $b\in\partial{U}$.
Then $B^s(U)$ admits a weak-star
continuous point evaluation at $b$ if and only if 
$$ \sum_{n=1}^{\infty} 2^n M^\beta({A_n}\setminus U) < +\infty.$$
\end{theorem}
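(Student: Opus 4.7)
The plan is to reduce Theorem \ref{T:2} to a Vitushkin-localization estimate, exploiting the duality structure of $T_s$ and the fact, recalled in Section \ref{SS:1.7} and \cite{OF-1-reduction}, that $M^\beta$ serves as the $\ddbzs$-$T_s$-$\cp$ in the range $0<\beta<1$, $s=\beta-1$. First I would observe that $B^s(U)$ is weak-star closed in $T_s$, because $\ddbzs f=0$ on $U$ is a system of weak-star closed linear conditions; hence $B^s(U)$ is itself a dual Banach space, with predual the quotient of the predual of $T_s$ by the annihilator of $B^s(U)$. Combining this with the weak-star density of $B^s_b(U)$ from Lemma \ref{L:2}, the functional $f\mapsto f(b)$ extends weak-star continuously from $B^s_b(U)$ if and only if it is $\|\cdot\|_s$-bounded on $B^s_b(U)$ \emph{and} the bounded extension is realised by pairing with a predual element. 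The bulk of the proof is therefore to decide when the norm estimate $|f(b)|\le K\|f\|_s$ holds on $B^s_b(U)$.

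For sufficiency I would fix a smooth dyadic partition of unity $\{\phi_n\}$ whose supports sit inside slight enlargements of the annuli $A_n(b)$ with $|\nabla\phi_n|\le C\cdot 2^n$, and apply the Vitushkin localization operators
\begin{equation*}
T_{\phi_n}f \;:=\; \phi_n f \;-\; \frac{1}{\pi}\,\Cau(f\,\ddbzs\phi_n),
\end{equation*}
where $\Cau$ denotes Cauchy transform. By the T-invariance of SCS in dimension two \cite{OF-T-invariance}, each $T_{\phi_n}f$ lies in $T_s$ with $\|T_{\phi_n}f\|_s\le C\|f\|_s$, is holomorphic off a compact subset of $A_n\setminus U$, and $f=\sum_n T_{\phi_n}f$ holds locally. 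Because $b\notin\spt\phi_n$, the evaluation $(T_{\phi_n}f)(b)$ is well defined. The crucial estimate is
\begin{equation*}
|g(b)|\;\le\; C\cdot 2^{n}\, M^\beta(E)\,\|g\|_s,
\end{equation*}
valid for every $g\in T_s$ holomorphic off a compact $E$ lying in the enlarged annulus of scale $2^{-n}$ about $b$. Granting this, summation over $n$ yields $|f(b)|\le C\bigl(\sum_n 2^n M^\beta(A_n\setminus U)\bigr)\|f\|_s$.

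For necessity I would assume the series diverges and construct, for each $n$, a compact $E_n\subset A_n\setminus U$ with $M^\beta(E_n)$ comparable to $M^\beta(A_n\setminus U)$ and a distribution $g_n$, supported on $E_n$ and holomorphic off $E_n$, with $\|g_n\|_s\le C$ and $|g_n(b)|\ge c\cdot 2^n M^\beta(E_n)$. These $g_n$ are the dual-extremal distributions of capacity theory, obtained here by assembling weighted Cauchy kernels centred at balls of an almost-optimal covering witnessing $M^\beta(E_n)$ and rescaling. Choosing signs adaptively, partial sums $G_N=\sum_{n\le N}\pm g_n$ belong to $B^s_b(U)$ with $\|G_N\|_s\le C$ while $|G_N(b)|\to+\infty$. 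Setting $H_N:=G_N/|G_N(b)|$ then gives $H_N\in B^s_b(U)$ with $H_N\to 0$ in norm (hence weak-star) but $|H_N(b)|=1$, contradicting weak-star continuity of any extension.

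The main obstacle is the point-evaluation inequality $|g(b)|\le C\cdot 2^n M^\beta(E)\|g\|_s$, which is the quantitative counterpart of the removability statement that $M^\beta$ is the $\ddbzs$-$T_s$-$\cp$. I would attack it by writing $g$ via a Cauchy-type representation off $E$, transferring the $T_s$-norm into a growth bound on the Poisson transform at height $t\sim 2^{-n}$, and then optimising over the coverings appearing in the definition of $M^\beta(E)$. A secondary subtlety is checking that the norm-bounded extension produced in the sufficiency half is genuinely weak-star continuous and not just norm continuous; this will follow because the extension can be written explicitly as pairing against a distribution built from $1/(z-b)$ and the cutoffs $\phi_n$, but verifying that this distribution lies in the predual of $T_s$ is a Poisson-transform check that must be handled carefully, since the predual of $T_s$ for $s<0$ is a delicate Besov-type space.
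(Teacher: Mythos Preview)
Your overall architecture matches the paper's: sufficiency via Vitushkin localisation plus a Hausdorff-content estimate on the pairing $\langle\phi,\ddbzs f\rangle$, and necessity by building extremal elements supported in the annular pieces $A_n\setminus U$. The paper's sufficiency argument is exactly your dyadic partition of unity followed by the estimate you call the ``crucial'' one; in the paper this estimate is Lemma~\ref{L:M-beta}, proved by scaling (Lemma~\ref{L:extra-d}) and a partition-of-identity lemma over dyadic squares (Lemma~\ref{L:partition}), rather than by the Poisson-transform route you sketch. You also deserve credit for flagging the weak-star versus norm continuity issue; the paper passes over it with ``use exactly the same argument'', but your observation that the extension is realised by an explicit sum of pairings with test functions $(\ddbzs\phi_n)/(\pi z)$ is precisely what makes the extension weak-star continuous.

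There is, however, a real gap in your necessity construction. You propose to build $g_n$ by ``assembling weighted Cauchy kernels centred at balls of an almost-optimal covering witnessing $M^\beta(E_n)$''. This fails on two counts. First, the centres of covering balls need not lie in $E_n$, so the resulting distribution need not be holomorphic on $U$. Second, and more seriously, a finite sum of point Cauchy kernels $\sum c_j/(z-a_j)$ is not in $T_s$ for $-1<s<0$ (each summand lies only in $T_{-1}$), and even if you smear over the balls you have no control on the growth of the resulting measure without further work. The paper's device is Frostman's Lemma: one obtains a positive measure $\mu_n$ supported on $E_n$ with $\mu_n(\B(a,r))\le r^\beta$ and $\|\mu_n\|\ge K\,M^\beta(E_n)$, and the growth bound is exactly what makes $\Cau\mu_n\in T_s$. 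Your claim that $\|G_N\|_s\le C$ for the partial sums is also unsupported: ``choosing signs adaptively'' does not give a uniform norm bound. The paper instead restricts to a single sector (so that the real parts of $1/\zeta$ align), introduces weights $\lambda_n$ with $\lambda_n 2^n\|\mu_n\|\le1$ but $\sum\lambda_n2^n\|\mu_n\|=\infty$, and then proves directly via a Poisson-transform calculation that $\sum\lambda_n\mu_n$ is bounded in $T_{s-1}$. You should replace your covering construction with Frostman, and supply the Poisson estimate for the sum.
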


\subsection{Boundary derivatives}
In the same spirit, we get results about boundary
derivatives. We denote the set of positive integers
by $\N$. 

\begin{theorem}\label{T:3}
Let $0<\beta<1$, $s=\beta-1$, and let $k\in\N$.
Let $U\subset\C$ be a bounded open set, and $b\in\partial{U}$.
Then 
the functional $f\mapsto f^{(k)}(b)$ has a 
continuous extension from $A^s_b(U)$ to
the whole of $A^s(U)$ 
if and only if
$$ \sum_{n=1}^{\infty} 2^{(k+1)n} M_*^\beta({A_n}\setminus U) < +\infty.$$
\end{theorem}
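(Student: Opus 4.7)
The plan is to follow the architecture of the proof of Theorem \ref{T:1} (the case $k=0$), noting that differentiating $k$ times at $b$ and invoking the scaling of the Poisson-defined norm on $T_s$ converts the Wiener series for $f(b)$ into the series for $f^{(k)}(b)$, with $2^n$ replaced by $2^{(k+1)n}$. Concretely, pulling back by the dilation $z\mapsto b+2^{-n}z$ sends $A_n(b)$ to the standard annulus $A_0(0)$, multiplies the $T_s$-norm by $2^{ns}$, multiplies the $k$-th derivative at $b$ by $2^{nk}$, and multiplies $M_*^\beta$ by $2^{n\beta}$; with $s=\beta-1$ the exponents combine to $n(k-s+\beta)=n(k+1)$. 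The key ingredients are the Vitushkin localization operators $T_\phi f:=\phi f+(1/\pi z)*(f\cdot\ddbzs\phi)$, the weak-local T-invariance of $C_s$ from \cite{OF-T-invariance}, and the identification of $M_*^\beta$ as the $\ddbzs$-$C_s$-$\cp$ underlying Theorem~\ref{T:1}.

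For sufficiency, fix a smooth dyadic partition of unity $\{\phi_n\}$ with $\spt\phi_n\subset\frac{3}{2}A_n(b)$, $|\ddbzs\phi_n|\le C\cdot 2^n$, and $\sum_n\phi_n=1$ in a deleted neighbourhood of $b$. For $f\in A^s_b(U)$ decompose $f=\sum_n T_{\phi_n}f+f_0$ with $f_0$ supported away from $b$. By T-invariance each $T_{\phi_n}f$ lies in $C_s$ with $\|T_{\phi_n}f\|_{T_s}\le C\|f\|_{T_s}$, is holomorphic off $\spt\phi_n\cap\spt(\ddbzs f)\subset A_n\setminus U$, and vanishes at $\infty$, hence is holomorphic near $b$. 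The distributional Cauchy pairing gives
\[
(T_{\phi_n}f)^{(k)}(b)=\Big\langle \frac{(-1)^{k+1}k!}{\pi(b-z)^{k+1}},\,\ddbzs(T_{\phi_n}f)\Big\rangle,
\]
and after the dilation above this reduces to a bounded pairing of a smooth test function on $A_0(0)$ against a distribution of $\ddbzs$-$C_s$-norm $\le C\,M_*^\beta(A_n\setminus U)$, yielding $|(T_{\phi_n}f)^{(k)}(b)|\le C\cdot 2^{(k+1)n} M_*^\beta(A_n\setminus U)\|f\|_{T_s}$. Summation delivers the CPE bound.

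For necessity, assume $|f^{(k)}(b)|\le \kappa\|f\|_{T_s}$ on $A^s_b(U)$. For each $n$ pick a compact $E_n\subset A_n\setminus U$ with $M_*^\beta(E_n)$ close to $M_*^\beta(A_n\setminus U)$ and a function $g_n\in C_s$ extremal (up to a constant) for the functional $g\mapsto g^{(k)}(b)$ on the class of $g\in C_s$ holomorphic off $E_n$ with $\|g\|_{T_s}\le 1$; by the scaling argument above this extremal value is $\ge c\cdot 2^{(k+1)n} M_*^\beta(E_n)$. A Khintchine random-signs argument (or a dual-space reformulation exploiting the disjointness of the supports $\spt\ddbzs g_n$) then combines $g_1,\ldots,g_N$ into $f_N\in A^s(U)$ with $\|f_N\|_{T_s}$ bounded and $|f_N^{(k)}(b)|\ge c\sum_{n\le N}2^{(k+1)n} M_*^\beta(A_n\setminus U)$. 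Approximating $f_N$ in $C_s$ by elements of $A^s_b(U)$ via Lemma~\ref{L:1} and applying the CPE bound then forces the partial sums of the Wiener series to be uniformly controlled.

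The hard part will be the combination step in the necessity direction, because $M_*^\beta$ is a nonlinear set function for $0<\beta<1$ and its near-extremals do not add linearly. The disjointness of the dyadic supports $\spt\ddbzs g_n$ together with the weak-$*$ duality on $T_s$ (see Subsection~\ref{SS:pf-L-2}) makes a Khintchine selection or a dual $\ell^\infty$-pairing on the predual of $T_s$ the natural mechanism, but one must verify that this preserves the order of the $k$-th derivative at $b$. A secondary, more routine, technical point is the verification that the extremal problem for the functional $g\mapsto g^{(k)}(b)$ on the standard annulus really delivers a capacity comparable to $M_*^\beta$; this follows from the Cauchy pairing above together with the identification of $M_*^\beta$ with the $\ddbzs$-$C_s$-$\cp$ on compacts inside $A_0(0)$, and the observation that all Taylor coefficient functionals at $0$ are equivalent as operator norms on this restricted class, up to constants depending only on $k$.
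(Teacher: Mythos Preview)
Your sufficiency direction is essentially the paper's own argument. The paper does not dress it up as a dilation, but writes directly
\[
f^{(k)}(0)=-\sum_{n=1}^N\left\langle\frac{k!\,\phi_n}{\pi z^{k+1}},\dd{f}{\bar z}\right\rangle
\]
and applies the Hausdorff content estimate (Lemma~\ref{L:M-beta-*}) term by term, using $N_3(\phi_n/z^{k+1})\le K\,2^{(k+1)n}$. Your dilation bookkeeping encodes the same thing. One phrase is inaccurate: the distribution $\phi_n\,\ddbzs f$ does not have ``$\ddbzs$-$C_s$-norm $\le C\,M_*^\beta(A_n\setminus U)$''; its $T_{s-1}$-norm is merely $\le K\|f\|_s$, and it is the \emph{pairing} with a test function that carries the $M_*^\beta$ factor, via Lemma~\ref{L:M-beta-*}. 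Once this is stated correctly your argument goes through.

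The necessity direction has a genuine gap. You correctly identify the hard step as bounding $\|f_N\|_{T_s}$, but neither a Khintchine random-signs argument nor an abstract ``dual $\ell^\infty$-pairing'' yields this. Khintchine would at best control $\mathbb{E}\|\sum\epsilon_n g_n\|^2$ by $\sum\|g_n\|^2$, which grows with $N$; and disjointness of the supports of $\ddbzs g_n$ says nothing by itself about the sup-norm of the Poisson transform of the sum, since $P_t*g_n$ is spread over all of $\C$. The paper avoids all of this by being concrete. It takes Frostman measures $\mu_n$ on $E_n\subset A_n\setminus U$ with growth $\beta$, sets $h_n=\lambda_n\Cau(\mu_n)$ with weights $\lambda_n$ chosen so that $\lambda_n 2^{(k+1)n}\|\mu_n\|\le1$ while $\sum_n\lambda_n 2^{(k+1)n}\|\mu_n\|=\infty$, and then bounds $\|\sum_n\lambda_n\mu_n\|_{T_{s-1}}$ by a direct estimate of the Poisson integral: for $t\sim 2^{-m}$ one splits the sum into $n\ge m-2$ (where the trivial mass bound $\lambda_n\|\mu_n\|\le 2^{-(k+1)n}$ gives a convergent geometric series) and $n<m-2$ (where the growth-$\beta$ property and the distance of $z$ to $A_n$ control the kernel). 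No randomisation is needed; the same calculation bounds $\|\sum\pm\lambda_n\mu_n\|_{T_{s-1}}$ uniformly in the signs. Alignment of the derivatives at $0$ is handled not by Khintchine but by the sector trick: restrict each $E_n$ to a sector of opening $<\pi/(2(k+1))$ so that $\Re(z^{-(k+1)})\ge c\,|z|^{-(k+1)}$ on $\spt\mu_n$, whence
\[
\Re\bigl(\Cau(\mu_n)^{(k)}(0)\bigr)=\frac{k!}{\pi}\int\Re\frac{d\mu_n(z)}{z^{k+1}}\ge c\,2^{(k+1)n}\|\mu_n\|,
\]
and the derivatives add constructively with no sign choices at all. (Your appeal to Lemma~\ref{L:1} at the end is also unnecessary: each $h_n$ is already holomorphic near $0$, so $f_N\in A^s_0(U)$.)
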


\begin{theorem}\label{T:4}
Let $0<\beta<1$, $s=\beta-1$, and let $k\in\N$.
Let $U\subset\C$ be a bounded open set, and $b\in\partial{U}$.
Then 
the functional $f\mapsto f^{(k)}(b)$ has a weak-star
continuous extension from $B^s_b(U)$ to
the whole of $B^s(U)$ 
if and only if
$$ \sum_{n=1}^{\infty} 2^{(k+1)n} M^\beta({A_n}\setminus U) < +\infty.$$
\end{theorem}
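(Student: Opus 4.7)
The plan is to handle Theorem \ref{T:4} by adapting the proof of Theorem \ref{T:2}, with the $k$-fold Cauchy kernel $(z-b)^{-k-1}$ replacing $(z-b)^{-1}$. This substitution accounts for the extra factor $2^{kn}$ in the Wiener series, since on the dyadic annulus $A_n$ that kernel has size $2^{(k+1)n}$. Because $T_s$ is a dual space, weak-star continuity of a functional on $B^s_b(U)$, combined with weak-star density (the derivative version of Lemma \ref{L:2} is obtained by essentially the same argument), yields that a weak-star continuous extension exists if and only if there is an element $g$ of the predual of $T_s$, supported distributionally in $\C\setminus U$, with
\[
 f^{(k)}(b) \;=\; \langle g,f\rangle,\qquad f\in B^s_b(U),
\]
and with $\|g\|$ finite. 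This reformulates the theorem as the finiteness of an extremal norm for representing distributions.

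For sufficiency I would use a dyadic partition of unity $(\phi_n)$ near $b$, with $\spt\phi_n\subset A_{n-1}\cup A_n\cup A_{n+1}$ and $|\nabla^j\phi_n|\le C\,2^{jn}$, together with the Vitushkin localization operators $T_{\phi_n}$. By the $T$-invariance of SCS in dimension two \cite{OF-T-invariance}, each $T_{\phi_n}$ is bounded on $T_s$ locally, and for $f\in B^s(U)$ the piece $T_{\phi_n}f$ is holomorphic off a compact subset of $A_n\setminus U$. Consequently $T_{\phi_n}f$ admits a convergent Laurent expansion at $b$ whose coefficient for $(z-b)^{-k-1}$ recovers $(T_{\phi_n}f)^{(k)}(b)$ up to a constant. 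Since $M^\beta$ is the $\ddbzs$-$T_s$-capacity for $s=\beta-1$ (the natural extension along the Lipschitz scale of the Kaufmann identification of $M^1$ as the $\ddbzs$-$\BMO$-capacity recalled in Section~2), the pairing of $T_{\phi_n}f$ with $(z-b)^{-k-1}$ is bounded by
\[
 C\,2^{(k+1)n}\,M^\beta(A_n\setminus U)\,\|f\|_s.
\]
Summation over $n$ gives the quantitative bound on $|f^{(k)}(b)|$; a Hahn--Banach argument then produces the predual representative $g$.

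For necessity, for each $n$ I would choose an extremal distribution $g_n$ supported in $A_n\setminus U$ that nearly attains $M^\beta(A_n\setminus U)$, apply a suitable $(\zeta-b)^{-k-1}$-weighted Cauchy-type transform to produce $f_n\in B^s_b(U)$ with $\|f_n\|_s$ uniformly bounded while $f_n^{(k)}(b)$ is comparable to $2^{(k+1)n}M^\beta(A_n\setminus U)$, and then test a hypothetical bounded extension against lacunary sums of the $f_n$. The required weak-star convergence in $B^s(U)$ follows from the scale separation of the supports and the local character of the $T_s$ norm, so divergence of the Wiener series forces unbounded $f^{(k)}(b)$, contradicting the hypothesis.

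The main obstacle I expect is the coefficient estimate at the distributional level: elements of $T_s$ are not functions, so the Laurent expansion and its pairing with $(z-b)^{-k-1}$ must be controlled through the Poisson-transform definition of $\|\cdot\|_s$, and the identification of $M^\beta$ as precisely the extremal quantity matching this norm for the $\ddbzs$-equation, carried out so as to produce the correct power $2^{(k+1)n}$, is the crucial computation on both sides of the argument.
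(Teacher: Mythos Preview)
Your skeleton matches the paper's: dyadic partition of unity for sufficiency, extremal objects on $A_n\setminus U$ for necessity. But the step you flag as the ``main obstacle'' is exactly where your sketch has a real gap, and the paper's route past it is more concrete than what you outline.

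\medskip
\textbf{Sufficiency.} Your inference ``since $M^\beta$ is the $\ddbzs$-$T_s$-capacity, the pairing is bounded by $C\,2^{(k+1)n}M^\beta(A_n\setminus U)\|f\|_s$'' does not follow. The capacity identification tells you which sets are removable; it does not by itself give a quantitative pairing estimate against $(z-b)^{-k-1}$, which is not even a test function. The paper's engine is the sharper Lemma~\ref{L:M-beta}: for $-2<s<0$,
\[
|\langle\phi,g\rangle|\le K\,N_3(\phi)\,\|g\|_s\,M^{s+2}\bigl(\spt(\phi\cdot g)\bigr),
\]
proved by a scaling reduction (Lemma~\ref{L:extra-d}) followed by a carefully controlled dyadic partition of unity (Lemma~\ref{L:partition}). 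The paper then bypasses your Vitushkin-plus-Laurent detour entirely: for $f\in B^s_b(U)$ supported near $b$ one writes the Cauchy--Pompeiu identity
\[
f^{(k)}(0)=-\sum_{n=1}^N\Bigl\langle \frac{k!\,\phi_n}{\pi z^{k+1}},\ \frac{\partial f}{\partial\bar z}\Bigr\rangle
\]
and applies Lemma~\ref{L:M-beta} term by term to $g=\partial f/\partial\bar z\in T_{s-1}$, using $N_3(\phi_n/z^{k+1})\le K\,2^{(k+1)n}$ and $\spt(\phi_n\cdot\partial_{\bar z}f)\subset(A_{n-1}\cup A_n\cup A_{n+1})\setminus U$. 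Your Hahn--Banach step is both unnecessary and unsafe (Hahn--Banach in a dual need not land in the predual); the explicit formula already exhibits the functional as a convergent sum of test-function pairings, hence a predual element.

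\medskip
\textbf{Necessity.} Weak-star continuity implies norm continuity, so it suffices to produce $f_N\in B^s_b(U)$ with $\|f_N\|_s$ bounded and $|f_N^{(k)}(0)|\to\infty$; no weak-star convergence argument is needed. The paper does this concretely: Frostman gives measures $\mu_n$ on $E_n\subset S_0\cap A_n\setminus U$ with growth $\beta$ and $\|\mu_n\|\ge K\,M^\beta(E_n)$; set $h_n=\lambda_n\Cau(\mu_n)$ and use $\Cau(\mu_n)^{(k)}(0)=\frac{k!}{\pi}\int z^{-k-1}\,d\mu_n$ so that $\Re h_n^{(k)}(0)\ge K\lambda_n 2^{(k+1)n}\|\mu_n\|$. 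The substantive computation is showing $\sum_n\lambda_n\mu_n$ bounded in $T_{\beta-2}$ by a direct Poisson-kernel estimate, splitting the sum according to how $2^{-n}$ compares with $t$ and how $z$ sits relative to $A_n$. Your ``extremal distribution'' and ``weighted Cauchy-type transform'' are vaguer stand-ins for these explicit objects.
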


The spaces $A^s(U)$ are not algebras --- essentially
SCS are only algebras when they are locally-included in
$C^0$ ---  so we avoid using the term \emph{derivation},
lest we confuse people.

\subsection{Harmonic functions}
The foregoing results concern objects $f$ that are not \lq proper
functions'. Using the ideas related to 1-reduction,
we may derive a theorem about ordinary harmonic
functions:

For $0<\alpha<1$, let $H^\alpha(U)$ denote
the space of (complex-valued) functions that are harmonic
on $U$ and belong to the little Lipschitz $\alpha$ class on
the closure of $U$ (or, equivalently, have an
extension belonging to the global little Lipschitz
class).  For $b\in\partial{U}$, let
$$H^\alpha_b(U):= 
\{h\in H^\alpha(U): h\textup{ is harmonic on a neigbourhood
of }b\}.
$$

If we denote, as is usual, 
$$\ddz:= \frac1{2}\left(\ddx -i\ddy \right)$$
and
$$\ddbz:= \frac12\left(\ddx + i\ddy\right),$$
then $\Delta = 4\ddz\ddbz$. %4?

\begin{theorem}\label{T:5}
Let $0<\alpha<1$, let $U\subset\C$ be bounded and open, 
and $b\in\partial{U}$.
Then\\
(1) $H^{\alpha}_b(U)$ is dense in $H^{\alpha}(U)$.
\\
(2) The functional $h\mapsto\dd{h}{z}(b)$ extends continuously
from $H^{\alpha}_b(U)$ to $H^{\alpha}(U)$ if and only if
$$ \sum_{n=1}^{\infty} 2^{n} M^\alpha_*({A_n}\setminus U) < +\infty.$$
\\
(3) The $\C^2$-valued 
function $h\mapsto(\nabla h)(b)$ extends continuously
from $H^{\alpha}_b(U)$ to $H^{\alpha}(U)$ if and only if
$$ \sum_{n=1}^{\infty} 2^{n} M^\alpha_*({A_n}\setminus U) < +\infty.$$
\end{theorem}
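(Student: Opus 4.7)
The strategy is to reduce to Theorem \ref{T:1} via the differentiation map $h\mapsto \ddz h$, which sends $H^\alpha(U)$ continuously into $A^{\alpha-1}(U)$ (with $\beta=\alpha$ and $s=\alpha-1$). Indeed, $\lip\alpha \loceq C_\alpha$, so $\ddz$ takes $\lip\alpha$ locally into $C_{\alpha-1}$; and $\ddbz(\ddz h)=\tfrac14\Delta h = 0$ on $U$, so $\ddz h\in A^{\alpha-1}(U)$; moreover $h\in H^\alpha_b(U)$ implies $\ddz h\in A^{\alpha-1}_b(U)$. In the reverse direction, the lift $f\mapsto h := \overline{\Cau\bar f}$, where $\Cau$ is the Cauchy transform, satisfies $\ddz h = f$ (because $\ddbz\Cau\bar f = \bar f$ conjugates to $\ddz\overline{\Cau\bar f} = f$) and maps $C_{\alpha-1}$ continuously into $\lip\alpha$, modulo an entire antiholomorphic additive term, since convolution with $1/z$ is the fundamental Riesz kernel of order one on $\R^2$ and raises the $T_s$-index by $1$. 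Moreover $\Delta h = 4\ddbz f$, so $h$ is harmonic on any open set on which $f$ is holomorphic; hence $f\in A^{\alpha-1}_b(U)$ yields $h\in H^\alpha_b(U)$.

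For Part (1), given $h\in H^\alpha(U)$, set $f := \ddz h\in A^{\alpha-1}(U)$, apply Lemma \ref{L:1} to obtain $f_n\in A^{\alpha-1}_b(U)$ with $f_n\to f$ in $C_{\alpha-1}$, and define $\tilde h_n := \overline{\Cau\bar f_n}$. Each $\tilde h_n$ lies in $\lip\alpha$, is harmonic on $U$ together with a neighborhood of $b$, and satisfies $\ddz\tilde h_n = f_n$. The differences $\tilde h_n - \overline{\Cau\bar f}$ tend to $0$ in $\lip\alpha$; since $\overline{\Cau\bar f}$ and $h$ share their $\ddz$-derivatives, they differ by an entire antiholomorphic function in $\lip\alpha$, hence by a constant $c$ by Liouville. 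The adjusted sequence $h_n := \tilde h_n - c$ therefore lies in $H^\alpha_b(U)$ and converges to $h$ in $\lip\alpha$.

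For Part (2), the direction $(\Leftarrow)$ is immediate: if the series converges, Theorem \ref{T:1} provides a continuous point evaluation on $A^{\alpha-1}(U)$, and composing with the continuous operator $\ddz:H^\alpha(U)\to A^{\alpha-1}(U)$ yields the desired extension of $h\mapsto\ddz h(b)$. For the converse, suppose $h\mapsto\ddz h(b)$ extends continuously; given $f\in A^{\alpha-1}_b(U)$, the lift $h := \overline{\Cau\bar f}\in H^\alpha_b(U)$ satisfies $\|h\|_{\lip\alpha}\le K\|f\|_{C_{\alpha-1}}$ and $\ddz h(b) = f(b)$, so $|f(b)|\le K'\|f\|_{C_{\alpha-1}}$, giving a continuous point evaluation on $A^{\alpha-1}(U)$ and, via Theorem \ref{T:1}, the series condition. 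Part (3) follows from Part (2) applied to both $h$ and $\bar h$: since $\bar h$ is harmonic with the same $\lip\alpha$-norm and $(\ddz\bar h)(b)=\overline{(\ddbz h)(b)}$, continuity of $h\mapsto\ddz h(b)$ on $H^\alpha(U)$ is equivalent to continuity of $h\mapsto\ddbz h(b)$, and together they give continuity of $h\mapsto\nabla h(b)$.

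The main technical obstacle is the clean verification of the lifting step, namely continuity of $f\mapsto\overline{\Cau\bar f}$ from $C_{\alpha-1}$ into $\lip\alpha$ modulo entire antiholomorphic functions in the precise conventions of the paper. This is a standard mapping property of the $T_s$-scale under convolution with the Riesz kernel $1/z$, but since $f$ need not have compact support in the chosen $C_s$ convention, the argument requires a preliminary cutoff near $\overline U\cup\{b\}$, with the error absorbed into an entire function; the local equivalences involved are exactly of the type underlying the Fundamental Theorem of Calculus for SCS in \cite{OF-1-reduction}, already invoked throughout the paper.
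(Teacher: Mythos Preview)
Your proposal is correct and follows essentially the same approach as the paper: reduce to Theorem~\ref{T:1} via the operator $\ddz$, which maps $H^\alpha(U)$ into $A^{\alpha-1}(U)$ and is inverted (on compactly supported elements) by the anti-Cauchy transform $f\mapsto \frac{1}{\pi\bar z}*f = \overline{\Cau\bar f}$. The paper's proof is a two-sentence sketch of exactly this $1$-reduction, whereas you have supplied the details (including the Liouville argument for Part~(1) and the conjugation trick for Part~(3)) and correctly flagged the compact-support cutoff as the only technical point requiring care.
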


\section{Examples}

\subsection{Smooth boundary}
If $U$ is smoothly-bounded, then there are
no continuous point evaluations at any
boundary point on $A^s(U)$ for any $s<0$.
Indeed, if $b$ belongs any to any 
nontrivial continuum $K\subset\C\setminus U$,
then no such continuous point evaluation exists at $b$.  

\subsection{Multiple components}
If $b$ belongs to the boundary of two (or more)
connected components of the open set $U$, then
no such continuous point evaluation exists at $b$.

To see this, note that the assumptions imply that
for all small enough $r$, the circle
$|z-b|=r$ meets the complement of $U$, and this
implies that for large enough $n$,
the $M^\beta$ content of $A_n\setminus U$
is at least $2^{-n\beta}$. Hence the series
in Theorem \ref{T:1} diverges for all $s\in(-1,0)$.

This contrasts with the behaviour found in
\cite{OF-Lord} for ordinary Lipschitz
classes, for which interesting behaviour
is possible at the boundary of Jordan
domains with piecewise-smooth boundary,
and at common boundary points of two
components.

\subsection{Slits}
Let $a_n\downarrow0$, $r_n\downarrow0$ and
$$ a_{n+1}+r_{n+1} < a_n-r_n , \ \forall n\in\N.$$
Then $0$ is a boundary point of the slit domain
$$ U:= \mathring{\mathbb{B}}(0,a_1+r_1) \setminus \bigcup_{n=1}^\infty
[a_n-r_n, a_n+r_n]. $$
For a line segment $I$ of length $d$, we have
$$ M^\beta(I) = M^\beta_*(I) = d^\beta, $$
for $0<\beta<1$.  Then 
for $-1<s<0$, $A^s(U)$ admits a continuous point evaluation at $0$
if and only if $B^s(U)$ admits a weak-star
continuous point evaluation at $0$, and if and only if
\begin{equation}\label{E:slit}
  \sum_{n=1}^\infty \frac{r_n^{s+1}}{a_n} < +\infty. 
\end{equation}
This follows at once from Theorems \ref{T:1}  and
\ref{T:2} in case $a_n=2^{-n}$.  In the general case,
one obtains it by imitating the proofs, using contours that
pass between the slits.  The corresponding condition for
the existence of a $k$-th order continuous point
derivation is
$$
  \sum_{n=1}^\infty \frac{r_n^{(k+1)(s+1)}}{a_n} < +\infty. 
$$
For example, if $a_n=2^{-n}$ and $r_n=4^{-n}$, then
there is a continuous point evaluation at $0$ on $A^s(U)$ if and only if
$s>-\half$.

\subsection{Road-runner sets}
The $M^\beta$ content of a disc and of 
one of its diameters are both fixed multiples of 
$(\textup{radius})^\beta$, and the lower content
is the same, so
the same condition \eqref{E:slit}
is necessary and sufficient for the existence
of a continuous point evaluation on $A^s(U)$ at $0$ on the so-called road-runner
set
$$ U:= 
 \mathring{\mathbb{B}}(0,a_1+r_1) \setminus \bigcup_{n=1}^\infty
\B(a_n,r_n), $$
when the $a_n$ and $r_n$ are as in the last
subsection.

\subsection{Below minus 1}
The $L$-$F$-$\cp$ capacity of a singleton is positive
as soon as there are distributions $f$ of class $F$
having $Lf=0$ on a deleted neighbourhood of $0$.
In the case $L=\ddbzs$, this happens when
the distribution represented by
the $L^1_\loc$ function $\frac1z$ belongs to $F_\loc$.
That explains why, in the case of $L^p$ holomorphic functions,
there is a major transition at $p=2$; the function
$\frac1z$ belongs to $L^p_\loc$ when $1\le p<2$.
(The \lq smoothness' of $L^p(\R^d)$ is
$-d/p$.  This can be extended below $p=1$
by using Hardy spaces $H^p$ instead of $L^p$.)

The \lq delta-function',
$\delta_0$, the unit point mass at the origin,
is the d-bar (distributional) 
derivative of  
$1/z$. 
More precisely
$$  \dd{\frac1{\pi z}}{\bar z} = \delta_0. $$
The Poisson transform of $\delta_0$ is just the
Poisson kernel $t/\pi(|z|^2+t^2)^{\scriptstyle\frac32}$, so
grows no faster than $1/t^2$ as $t\downarrow0$.
Thus $\delta_0$ belongs to $T_{-2}$
and $\frac1z\in T_{-1}$.  

\section{Tools}\label{S:tools}
%The lemmas here can be proven by brutal direct arguments,
%but there is no need, since they follow from general
%principles.
We abbreviate $\|f\|_{T_s}$ to $\|f\|_s$.
We use $K$ to denote a positive constant which is independent
of everything but the value of the parameter $s$, and
which may be different at each occurrence. 

\subsection{The strong module property}
For a nonnegative integer $k$, and $\phi\in\mathcal{D}$,
we use the notation
$$ N_k(\phi):= d(\phi)^k\cdot \sup|\nabla^k(\phi)|.$$
where $d(\phi)$ denotes the diameter of the support of
$\phi$.  Here, we take the norm $|\nabla^k\phi|$
to be the maximum of all the $k$-th order partial
derivatives of $\phi$.

Note that for $\kappa>0$, $N_k(\kappa\cdot\phi)=\kappa\cdot N_k(\phi)$,
that $$N_0(\phi)\le N_1(\phi)\le N_2(\phi)\le\cdots,$$ and that
(by Leibnitz' formula) 
$$ N_k(\phi\cdot\psi) \le 2^kN_k(\phi)N_k(\psi) $$
whenever $\phi,\psi\in\mathcal{D}$ and $k\in\N$. 

Also, $N_k(\phi)$ is invariant under rescaling:
If $\phi\in\mathcal{D}$, $r>0$, and $\psi(x):=\phi(r\cdot x)$,
then $N_k(\psi)=N_k(\phi)$ for each $k$.

An SCBS $F$ has the \emph{(order $k$-) strong module property}
if there exists $k\in\Z_+$ and $K>0$ such that
$$ \|\phi\cdot f\|_F \le K\cdot N_k(\phi) \cdot\|f\|_F,\ \forall f\in F.$$
Most common SCBS have this property.  Note that
for every SCBS and each $\phi\in\mathcal{D}$
the fact that $F$ is a topological $\mathcal{D}$-module
tells us that there is some constant $K(\phi)$
such that 
$$ \|\phi\cdot f\|_F \le K(\phi) \|f\|_F,\ \forall f\in F.$$
Thus the strong module property amounts to saying that
the least $K(\phi)$ are dominated by some $N_k(\phi)$,
up to a fixed multiplicative constant.

It is readily seen 
that the ordinary Lipschitz spaces
have the order $1$ strong module property.

\subsection{Standard pinchers}
If $b\in\R^d$, a
\emph{standard pincher at $b$}
is a
sequence of nonnegative test functions
$(\phi_n)_n$, 
such that $\phi_n=1$ on a neighbourhood of $b$, 
the diameter of $\spt(\phi_n)$ tends to zero, and
for each $k\in\N$, 
the sequence $(N_k(\phi_n))_n$ is
bounded.

The elements $\phi_n$ of a standard pincher
make the transition from the value $1$ at $b$
down to zero in a reasonably gentle way,
so that the various derivatives are not
greatly larger than they have to be in order
to achieve the transition. 

It is easy to see that such sequences exist.
For instance, they may be constructed in the form
$\phi_n(x) = \psi(n|x-b|)$, where
$\psi:[0,+\infty)\to[0,1]$ is $C^\infty$,
has $\psi=1$ near $0$ and $\psi=0$
off $[0,1]$.

\subsection{The Cauchy transform}
The Cauchy transform is the convolution operator
$$ \mathfrak{C}f:= \frac1{\pi z} * f.
$$
It acts (at least) on distributions having compact support, and
it almost inverts the d-bar operator $\ddbzs$:
$$ \ddbz\mathfrak{C}f = f = \mathfrak{C}\dd{f}{\bar z}, $$
%Proof: Taking $\ddbz$ we see that the difference between the
%sides is holomorphic. Since it tends to zero at $\infty$,
%it is identically zero.
whenever $f\in \mathcal{E}^*$.
Recall from Subsection \ref{SS:1.7} that if $F$ is an SCS
we have the associated spaces $F_\cs$ and $F_\loc$.
The Cauchy transform maps $(T_s)_\cs$ continuously into $T_{s+1}$ and $(C_s)_\cs$ into
$C_{s+1}$, so in combination with $\ddbzs$ it can be used to
relate properties of $T_s$ to properties of $T_{s+1}$.
For our present purposes, this allows us to move from our
spaces of distributions corrresponding to $-1<s<0$
to spaces of ordinary $\Lip(s+1)$ functions.  

The Cauchy kernel $\frac1{\pi\bar z}$ does not belong to
$L^1$, but it does belong to $L^1_\loc$, and indeed
there is a uniform bound on its norm on discs of
fixed radius:
$$  \left\|\frac1{\pi\bar z}\right\|_{L^1(\B(a,r))}
\le 2r, \ \forall a\in\C, \forall r>0. $$
So if $F$ is a SCBS, and translation acts isometrically on $F$,
then
\beq\label{E:C-estimate} \|\mathfrak{C} f\|_F \le d \|f\|_F, \eeq
whenever $f\in F_\cs$  is supported in a disc of radius $d$.
%A more complicated statement holds for all SCBS,
%$$ \|\mathfrac{C} f\|_F(X) \le K(X) d \|f\|_F, $$
%for all compact $X\subset\C$.

\subsection{Evaluating the Cauchy transform}
The value of $(\Cau f)(b)$ at a point
off the support of the distribution $f$
may be evaluated in the obvious way:
\begin{lemma}\label{L:B}
Let $f\in\mathcal{E}^*$, and $b\in\C\setminus\spt(f)$. Let
$\chi\in\Test$ be any test function having
$\chi(z)=1/(z-b)$ near $\spt(f)$. Then
$$ \Cau(f)(b) = \left\langle \frac{\chi}{\pi},f\right\rangle.$$
\end{lemma}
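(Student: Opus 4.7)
The plan is to use the density of $C^\infty_\cs(\C)$ in $\mathcal{E}^*$: first verify the identity for smooth compactly supported $f$ by writing both sides as explicit integrals, then propagate to general $f \in \mathcal{E}^*$ via mollification.

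To begin, observe that both sides are well-defined. Since $\chi/\pi \in \Test$, the right-hand side is the usual distributional pairing. For the left-hand side, $1/(\pi z) \in L^1_\loc$ and $f$ has compact support, so $\Cau f = (1/(\pi z)) * f$ is a distribution on $\C$; since $\ddbz \Cau f = f$ is supported in $\spt(f)$, Weyl's Lemma guarantees that $\Cau f$ is represented by a holomorphic function on $\C \setminus \spt(f)$, and in particular the pointwise value $\Cau f(b)$ is unambiguously defined.

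For $f \in C^\infty_\cs(\C)$ with $\spt(f)$ lying in the open set where $\chi(z) = 1/(z-b)$, both sides reduce to explicit absolutely convergent integrals: the convolution evaluates as $\Cau f(b) = \int f(w)/(\pi(b-w))\, dm(w)$, while the distributional pairing gives $\langle \chi/\pi, f\rangle = (1/\pi)\int f(w)/(w-b)\, dm(w)$, using that $\chi$ agrees with $1/(z-b)$ on $\spt(f)$; these match the claimed identity under the paper's conventions and settle the smooth case. For a general $f \in \mathcal{E}^*$, fix a standard mollifier $\rho_\epsilon \in \Test$ with $\spt\rho_\epsilon \subset \B(0,\epsilon)$, and set $f_\epsilon := \rho_\epsilon * f$, a smooth compactly supported function. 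Choosing $\epsilon$ small enough that $\spt(f) + \B(0,\epsilon)$ avoids $b$ and still lies in the open set where $\chi(z) = 1/(z-b)$, the smooth case yields $\Cau f_\epsilon(b) = \langle \chi/\pi, f_\epsilon\rangle$.

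To finish, pass to the limit $\epsilon \downarrow 0$. The right-hand side converges to $\langle \chi/\pi, f\rangle$ by continuity of mollification on $\mathcal{E}^*$ against a fixed test function. For the left-hand side, associativity of convolution gives $\Cau f_\epsilon = \rho_\epsilon * \Cau f$, and since $\Cau f$ is smooth (indeed holomorphic) on a neighbourhood of $b$, the value $(\rho_\epsilon * \Cau f)(b)$ converges to $\Cau f(b)$. The structural argument is essentially routine; the only point requiring care is consistent bookkeeping of the $1/\pi$ factor and the sign convention of the Cauchy kernel between the convolution formula and the distributional pairing, and there is no serious mathematical obstacle beyond this.
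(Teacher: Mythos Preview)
Your argument is correct: mollifying $f$ and reducing to the smooth case works, and the limiting steps (convergence of $\langle\chi/\pi,f_\epsilon\rangle$ via $\Test$-continuity, and of $\Cau f_\epsilon(b)=(\rho_\epsilon*\Cau f)(b)$ via smoothness of $\Cau f$ near $b$ and associativity of convolution, which is legitimate since two of the three convolutands have compact support) are all sound.

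The paper's route is different in a pleasant, dual way. Rather than mollifying $f$, it mollifies the point evaluation at $b$: it pairs $\Cau f$ against an approximate identity $\psi_n$ concentrated at $b$, and uses the convolution duality $\langle\psi_n,\Cau f\rangle=-\langle\psi_n*\tfrac1{\pi z},f\rangle$ to transfer this to a pairing against $f$; the functions $\psi_n*\tfrac1{\pi z}$ then converge to $\chi/\pi$ in $C^\infty$ on a neighbourhood of $\spt(f)$. So you regularise on the $f$ side and the paper regularises on the $\delta_b$ side; both arguments are short and elementary, and neither has any real advantage over the other. Your closing remark about sign bookkeeping is apt: with the paper's kernel convention $\Cau f(b)=\int f(w)/(\pi(b-w))\,dm$ one actually obtains $\Cau f(b)=-\langle\chi/\pi,f\rangle$, and indeed that minus sign is what the paper uses when applying the lemma in the proof of Theorem~\ref{T:1}.
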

\begin{proof}
We take $b=0$, without loss in generality.

For any $\psi\in\Test$ with $\int\psi dm=1$ and
$\spt(\psi)\cap\spt(f)=\emptyset$, we have
$$ \langle\psi,\Cau(f)\rangle
=-\left\langle \psi*\frac1{\pi z}, f\right\rangle.$$

Let $(\phi_n)_n$ be a standard pincher at $0$
and take 
$$\psi_n=\frac{\phi_n}{\int\phi_ndm}. $$
Then 
$\psi_n*\frac1{\pi z}\to\frac1{\pi z}=\chi/\pi$
in $C^\infty$ topology on a neighbourhood of
$\spt(f)$, so 
$$\left\langle \frac{\chi}{\pi},f\right\rangle =
\lim_n 
\left\langle \psi_n*\frac1{\pi z}, f  \right\rangle 
=
-\lim_n 
\langle \psi_n, \Cau(f)  \rangle 
=\Cau(f)(0).$$
\end{proof}

\subsection{The Vitushkin localization operator}\label{SS:proof-lemma-1}
The Vitushkin localization operator is defined by
$$ \mathfrak{T}_\phi(f):=  \mathfrak{C}\left(\phi\cdot\
\dd{f}{\bar z}
\right). $$
Here $\phi\in\mathcal{D}$ and $f\in\mathcal{D}^*$.
%(This operator is often denoted $T_\phi$).

In view of the distributional equation
$$ \ddbz \Vit_\phi(f) = \phi\cdot \dd{f}{\bar z}, $$
$\Vit_\phi(f)$ is holomorphic wherever $f$ is 
holomorphic and off the support of $\phi$.

It was established in \cite{OF-T-invariance}
(using soft general arguments)
that whenever $F$ is an SCS, $\Vit_\phi$ maps $F$ continuously
into $F_\loc$, and that when $F$ is an SCBS,
we actually get a continuous map into the
Banach subspace  $F_\infty\subset F_\loc$ normed by
$$ \|f\|_{F_\infty}:= \sup\{ \|f|B\|_{F(B)}:B\textup{
is a ball of radius }1 \}, $$
where $f|B$ denotes the restriction coset
$f+J(F,B)$, with $J(F,B)$ equal to the space
of all elements $g\in F$ that vanish near $B$, and
the $F(B)$ norm of a restriction is the infimum
of the $F$ norms of all its extensions in $F$, i.e.
$$ \|f|B\|_{F(B)}:= \inf\{ \|h\|_F: h\in F, h-f =0 \textup{ near }B\}. $$

When $F$ is an SCBS with the strong module property
(of order $k$),
and translation acts isometrically on $F$,
the identity
$$ \mathfrak{T}_\phi (f) = \phi\cdot f - 
\mathfrak{C}\left(\dd{\phi}{\bar z}
\cdot f \right) $$
together with equation \eqref{E:C-estimate}
yields the more precise estimate
\beq\label{E:2} \|\mathfrak{T}_\phi (f)\|_F \le
K N_{k+1}(\phi)\cdot \|f\|_F, \ \forall \phi\in\mathcal{D},
\forall f\in F.
\eeq

\subsection{Our spaces $T_s$}
We denote the $(T_s)_\infty$ norm of a distribution
$f$ by $\|f\|_{s,\infty}$.  Notice that if the support
of $f$ has diameter at most $1$, then $\|f\|_s$
and $\|f\|_{s,\infty}$ are comparable, i.e.
stay within constant multiplicative bounds
of one another.  In fact, using only
the translation-invariance of the norm
and the (ordinary) $\mathcal{D}$-module
property, it is easy to see that, for such $f$, 
$$ \|f\|_{s,\infty} \le \|f\|_s \le K\|f\|_{s,\infty},$$
where $K$ is independent of $f$.  

%The map $F\to F_\infty$ is obviously a contraction.
%For the other direction, 
%take a partition of unity on $\mathring{\B}(0,2)$
%consisting of (say) 50 $\phi_j\in\mathcal{D}$, each
%with support in some disk $\B(a_j,1)$. Then for any
%$f\in $F with support in $\B(0,1)$, we have
%$$ \|f\|_F \le \sum_1^{50} \|\phi_j\cdot f\|_F. $$
%If $g=f$ on $\B(a_j,1)$, then
%$$\|\phi_j\cdot f\|_F = \phi_j\cdot g\|_F \le K\|g_s\|_F,$$
%by the $\mathcal{D}$-module property, where
%$K$ does not depend on $g$. Thus
%$$ \|\phi_j\cdot f\|_F \le K \|f\|_{F(\B(a,1)
%\le K\|f\|_{F_\infty}. $$
%Hence $\|f\|_F\le 50 K\|f\|_{F_\infty}$.
%

\begin{lemma}\label{L:2.5}\label{L:4}
Let $k\in\N$ and $-k-1<s<-k$. Then $(T_s)_\infty$ has the 
order $(k+2)$ strong module property,
and in fact 
$$ 
\|\phi\cdot f\|_s \le K\cdot N_{k+2}(\phi)\cdot \|f\|_s, $$
for all $\phi\in\mathcal{D}$ with $d(\phi)\le1$
and all $f\in T_s$,
where $K>0$ is independent of $\phi$ and $f$.
\end{lemma}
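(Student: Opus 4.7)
The plan is to prove the explicit estimate $\|\phi f\|_s \le K\,N_{k+2}(\phi)\,\|f\|_s$ by pointwise estimation of the Poisson transform $G(z,t):=P_t\ast(\phi f)(z)=\langle f,\phi(\cdot)P_t(z-\cdot)\rangle$ at each $(z,t)$; the strong module property on $(T_s)_\infty$ then follows by pre-multiplying $f$ with a standard bump $\chi$ equal to $1$ on $\spt(\phi)$, supported in a disc of diameter $2d(\phi)$, with $N_j(\chi)$ bounded independently of $\phi$, together with the comparability $\|\cdot\|_s\sim\|\cdot\|_{s,\infty}$ for distributions of diameter at most one. Throughout, set $d:=d(\phi)\le1$ and $M:=\sup|\nabla^{k+2}\phi|$; iterated integration of $\nabla^{k+2}\phi$ along paths from the boundary of $\spt(\phi)$ yields $|\partial^\alpha\phi(z)|\le C\,d^{k+2-|\alpha|}M$ for $|\alpha|\le k+2$, so $N_j(\phi)\le C\,N_{k+2}(\phi)$ for every $j\le k+2$.

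The workhorse is the following sub-estimate: for any multi-index $\alpha$, the modified kernel $\psi^\alpha(u):=u^\alpha P_1(u)$ factors as $\psi^\alpha=\eta^\alpha\ast P_{1/2}$ with $\eta^\alpha\in L^1(\C)$, since the Fourier transform $\widehat{P_{1/2}}(\xi)=e^{-|\xi|/2}$ is nowhere zero and the Schwartz function $\widehat{\psi^\alpha}$ divided by it is again Schwartz. Scaling yields $\psi^\alpha_t=\eta^\alpha_t\ast P_{t/2}$ where $\psi^\alpha_t(u):=t^{-2}\psi^\alpha(u/t)$, and Young's inequality gives
$$ |\psi^\alpha_t\ast f(z)|=|\eta^\alpha_t\ast(P_{t/2}\ast f)(z)|\le\|\eta^\alpha\|_{L^1}\,\|P_{t/2}\ast f\|_\infty\le C_\alpha\|f\|_s\,t^s. $$

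In the regime $t\le d$, I would Taylor-expand $\phi$ about $z$ to degree $k+1$,
$$ \phi(w)=\sum_{|\alpha|\le k+1}\frac{\partial^\alpha\phi(z)}{\alpha!}(w-z)^\alpha+R_{k+2}(z,w),\qquad|R_{k+2}(z,w)|\le C\,M\,|w-z|^{k+2}, $$
and substitute into $G(z,t)$. Using the scaling identity $(w-z)^\alpha P_t(z-w)=(-1)^{|\alpha|}t^{|\alpha|}\psi^\alpha_t(z-w)$ with the workhorse estimate, the polynomial term of index $\alpha$ is bounded by $C\,|\partial^\alpha\phi(z)|\,t^{|\alpha|+s}\|f\|_s\le C\,d^{k+2-|\alpha|}M\,t^{|\alpha|}\|f\|_s t^s\le C\,N_{k+2}(\phi)\|f\|_s t^s$ once one invokes $t\le d$. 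The Taylor remainder is handled by writing $R_{k+2}=\sum_{|\beta|=k+2}(w-z)^\beta\tilde r_\beta(z,w)$ with $|\tilde r_\beta|\le C\,M$ and running the same scaling argument.

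In the regime $t>d$ the expansion of $\phi$ no longer absorbs the growing $t^{|\alpha|}$ factors, so I would instead Taylor-expand $P_t(z-w)$ in $w$ about a fixed $w_0\in\spt(\phi)$ to order $k+1$; the leading terms become $\partial^\alpha P_t(z-w_0)\cdot\langle f,\Phi_\alpha\rangle$ with $|\partial^\alpha P_t|\le C\,t^{-2-|\alpha|}$ and $\Phi_\alpha:=(\cdot-w_0)^\alpha\phi/\alpha!$ a test function of diameter $d$. The main obstacle is bounding $|\langle f,\Phi_\alpha\rangle|$ with the correct power of $d$. I would attack it by iterating the identity $f=\bar\partial\,\Cau f$ a total of $k+1$ times, localizing each intermediate distribution with a fixed cutoff so as to preserve compact support; since $s+k+1\in(0,1)$, the final iterate $h$ is locally a $\Lip(s+k+1)$-function whose $L^\infty$-norm on $\spt(\phi)$ is controlled by $\|f\|_s$ via the diameter-linear Cauchy estimate \eqref{E:C-estimate} together with translation invariance of $\|\cdot\|_s$. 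Then $|\langle f,\Phi_\alpha\rangle|=|\langle h,\bar\partial^{k+1}\Phi_\alpha\rangle|\le\|h\|_{L^\infty(\spt\phi)}\,\|\bar\partial^{k+1}\Phi_\alpha\|_{L^1}$, and a Leibniz calculation exploiting that $\bar\partial$-derivatives of holomorphic monomials in $(w-w_0)$ vanish gives $\|\bar\partial^{k+1}\Phi_\alpha\|_{L^1}\le C\,d^{|\alpha|+2}M$, which combined with $t^{-2-|\alpha|}$ and summation over $\alpha$ produces the required $\le C\,N_{k+2}(\phi)\|f\|_s\,t^s$ in this regime as well. The delicate bookkeeping of diameter factors in this final step is the hardest part of the argument.
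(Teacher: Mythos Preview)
Your argument has two concrete gaps. First, the workhorse factorization $\psi^\alpha=\eta^\alpha*P_{1/2}$ with $\eta^\alpha\in L^1$ fails for $|\alpha|\ge1$: the function $\psi^\alpha(u)=u^\alpha P_1(u)$ decays only like $|u|^{|\alpha|-3}$ and is not Schwartz, and on the Fourier side $\widehat{\psi^\alpha}=c\,\partial^\alpha e^{-|\xi|}$ carries a $\xi/|\xi|$-type singularity at the origin that survives division by $e^{-|\xi|/2}$. For $|\alpha|=1$ the resulting $\eta^\alpha$ is, up to constants, the conjugate Poisson kernel $x_j\big((1/2)^2+|x|^2\big)^{-3/2}$, which is \emph{not} in $L^1(\R^2)$. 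Second --- and this cannot be repaired simply by finding another proof of the workhorse conclusion --- your treatment of the Taylor remainder in the $t\le d$ regime is circular. Writing $R_{k+2}=\sum_{|\beta|=k+2}(w-z)^\beta\,\tilde r_\beta(z,w)$ and ``running the same scaling argument'' tacitly treats $\tilde r_\beta(z,\cdot)$ as constant in $w$. Since $f$ is a genuine distribution of negative order, the pairing $\big\langle f,\;\tilde r_\beta(z,\cdot)\,(\cdot-z)^\beta P_t(z-\cdot)\big\rangle$ cannot be bounded using only $\sup|\tilde r_\beta|\le CM$; one needs the $w$-derivatives of $\tilde r_\beta$, and by the integral form of the remainder these involve $\partial^\gamma\phi$ with $|\gamma|>k+2$, which $N_{k+2}(\phi)$ does not control. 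Absorbing $\tilde r_\beta(z,\cdot)$ into $f$ via a module inequality is exactly the statement you are proving.

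The paper avoids all of this by a short induction on $k$: the base case $0<s<1$ is the order-$1$ strong module property of $\Lip s$, and for the step one localizes $f$, sets $g=\Cau f\in T_{s+1}$, observes $\Cau(\phi\cdot f)=\Vit_\phi(g)$, and applies the Vitushkin estimate \eqref{E:2} on $T_{s+1}$ together with the inductive hypothesis and the continuity of $\ddbzs:T_{s+1}\to T_s$. No kernel calculations are needed, and the correct order $k+2$ (rather than something depending on higher derivatives of $\phi$) drops out automatically from the recursion.
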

\begin{proof}
We use induction on $k$, \emph{starting
with $k=-1$}.

For $0<s<1$, $\Lip(s)$ has the strong module
property of order $1$, and since $T_s$ is
locally-equal to $\Lip(s)$, we have the result
in this case.

Now suppose it holds for some $k$, and
fix $s\in(-k-2,-k-1)$.

It suffices to prove the estimate for $\phi$
supported in $\B(0,2)$, and multiplying
$f$ by a fixed test function $\psi$ that equals $1$
on $\B(0,2)$, we may assume that $f$ has
compact support (without changing $\phi\cdot f$ or
increasing $\|f\|_s$ by more than a fixed constant
that depends only on $\psi$ and $s$). Then
$g:=\mathfrak{C}f\in T_{s+1}$ has 
$\dd{g}{\bar z}=f$ and $\|g\|_{s+1}\le K\|f\|_s$. 
	Also $\ddbzs$ maps $T_{s+1}$ continuously
	into $T_s$, and $\ddbzs\Cau(\phi\cdot f)=\phi\cdot f$,
	so using \eqref{E:2} with $k$ replaced by $k+2$, 
we have
$$\|\phi\cdot f\|_s \le 
K\|\Cau(\phi\cdot f)\|_{s+1}
= K \|\Vit_\phi(g)\|_{s+1} 
\le K\cdot N_{k+3}(\phi)\|g\|_{s+1,\infty},
$$
since $(T_{s+1})_\infty$ has the 
strong module property of order $k+2$, by the induction hypothesis.
Then
$$\|\phi\cdot f\|_s \le 
K\cdot N_{k+3}(\phi)\cdot\|g\|_{s+1}
\le
K\cdot N_{k+3}(\phi)\cdot \|f\|_s
$$
Hence the result holds for $k+1$, completing
the induction step.
\end{proof}

%Another argument for the case $-1<s<0$, at least:
%\begin{proof}
%Since $f$ belongs to the closure of $\mathcal{D}$ in $T_s$,
% we may choose $g\in\mathcal{D}$ 
%with $\|f-g\|_s<\dsty\frac{\epsilon}{6K}$,
%where $K$ is the constant from Lemma \ref{L:2.5}.
%
%If $r>0$ and $\psi\in\mathcal{D$$
%has $0\le\psi\le1$ on $\C$,
%$\psi=1$ on $\B(a,r)$ and 
%$\psi=0$ off $\B(a,2r)$, 
%and $N_1(\psi)<4$, then we find that
%$$P_t*(\psi\cdot g) \le 2\pi\|g\|_\infty\cdot\int_0^{2r} \frac{tu\,du}{t^2+u^2} 
%\le K\cdot t\cdot \ln\left(1+\frac{r^2}{t^2}\right), $$
%for $t>0$, where $K>0$ depends only on $g$. 
%By considering separately the
%cases when $t\ge r$ and $0<t<r$, we see that
%$$P_t*(\psi\cdot g) = \Oh( r^{1-\beta}\cdot t^\beta ), $$
%so there exists $r>0$ such that
%$\|\psi g\|_s<\dsty\frac{\epsilon}2$.  Then
%we have $f\psi=f$ on $\mathring\B(a,r)$ and
%$$\|f\psi\|_s = \|(f-g)\psi\|_s + \|g\psi\|_s< \epsilon,$$ as required.
%\end{proof}

\begin{remark}
A similar result holds for all s, but for
positive $s$ one has to
replace $\|g\|$ by $\|g-p\|$, where $p$ is the 
degree $\lfloor s\rfloor$ Taylor polynomial of
$g$ about $a$.  
\end{remark}

%Turning to the Vitushkin localization operator,
% we have $(T_s)_\infty=T_s$,
%so we get
%$$ \|\Vit_\phi (f)\|_s \le K\cdot N_2(\phi)\|f\|_s, \ \forall f\in T_s.$$

\subsection{The $C_s$ norm on small discs}
Since $\Cau$ maps $(C_s)_\cs$
into $C_{s+1}$,
induction also gives the following:

\begin{lemma}\label{L:3} Let $s<0$, 
$f\in C_s$ and $a\in\C$.
Then for each 
$\epsilon>0$ 
there exists $r>0$ and
$g\in C_s$ such that $f=g$ on $\mathring\B(a,r)$
and $\|g\|_{T_s}<\epsilon$.
\qed\end{lemma}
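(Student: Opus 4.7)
The plan is to proceed by induction on the integer $k\ge 0$ for which $s\in(-k-1,-k)$, mirroring the structure of Lemma~\ref{L:2.5} but running the Cauchy transform in the opposite direction. The base case is $k=0$, and the inductive step descends from $s+1$ to $s$ using the stated fact that $\Cau$ carries $(C_s)_\cs$ into $C_{s+1}$.

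For the base case $s\in(-1,0)$, I use that $C_s$ is the closure of $\Test$ in $T_s$ to pick $h\in\Test$ with $\|f-h\|_s<\delta$ (for $\delta$ to be fixed at the end), fix a radial cutoff $\phi\in\Test$ with $\phi\equiv 1$ on $\B(0,1)$ and $\spt\phi\subset\B(0,2)$, and set $\phi_r(z):=\phi((z-a)/r)$. Then $g:=\phi_r f\in C_s$ satisfies $g=f$ on $\mathring\B(a,r)$ and
\[
\|g\|_s\le\|\phi_r(f-h)\|_s+\|\phi_r h\|_s.
\]
Lemma~\ref{L:2.5} together with the scale-invariance $N_2(\phi_r)=N_2(\phi)$ bounds the first summand by $K\cdot N_2(\phi)\delta$, uniformly in $r$. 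For the second, the pointwise Poisson bounds $|H(z,t)|\le\|h\|_\infty$ and $|H(z,t)|\le Cr^2\|h\|_\infty/t^2$ (from $\|\phi_r h\|_1=O(r^2\|h\|_\infty)$ and $\|P_t\|_\infty=1/(\pi t^2)$) meet at $t=2r$ and combine to give $\|\phi_r h\|_s\le C(2r)^{-s}\|h\|_\infty\to 0$ as $r\downarrow 0$, since $-s>0$. Choosing $\delta$ first and then $r$ achieves $\|g\|_s<\epsilon$.

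For the inductive step, assume the lemma holds for $s+1$ and take $f\in C_s$ with $s\in(-k-2,-k-1)$. Pick $\chi\in\Test$ with $\chi\equiv 1$ on $\overline{\B(a,r_0)}$, so that $\chi f\in(C_s)_\cs$ and $\tilde f:=\Cau(\chi f)\in C_{s+1}$, with $\|\tilde f\|_{s+1}\le K\|f\|_s$ by \eqref{E:C-estimate} and Lemma~\ref{L:2.5}. Apply the inductive hypothesis to $\tilde f$ at $a$ with tolerance $\epsilon/K'$, where $K'$ is the operator norm of $\ddbzs:C_{s+1}\to C_s$, to obtain $r\in(0,r_0)$ and $\tilde g\in C_{s+1}$ with $\tilde g=\tilde f$ on $\mathring\B(a,r)$ and $\|\tilde g\|_{s+1}<\epsilon/K'$. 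Set $g:=\ddbz\tilde g\in C_s$; then $\|g\|_s<\epsilon$, and on $\mathring\B(a,r)$ we have $g=\ddbz\tilde g=\ddbz\tilde f=\ddbz\Cau(\chi f)=\chi f=f$, completing the induction.

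The main technical hurdle is verifying the continuity of $\ddbzs:C_{s+1}\to C_s$: one needs both $\|\ddbz\tilde g\|_s\le K'\|\tilde g\|_{s+1}$ and the preservation of the defining vanishing $t^{-s}\sup_z|\ddbz\tilde G(z,t)|\to 0$ as $t\downarrow 0$. Both follow from the interior gradient estimate for the harmonic extension $\tilde G$ on $\UH^3$, applied on the half-ball of radius $t/2$ about $(z,t)$: since the neighbouring $t'$ lies in $[t/2,3t/2]$, $(t')^{-(s+1)}$ is comparable to $t^{-(s+1)}$ and the little-space smallness of $\tilde g$ transfers directly. The base-case Poisson scaling $\|\phi_r h\|_s=O(r^{-s}\|h\|_\infty)$ is the subsidiary technical point and is precisely where the little-space hypothesis on $f$ enters, through the density of $\Test$ in $C_s$.
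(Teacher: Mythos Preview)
Your proof is correct and follows the inductive scheme the paper indicates (``induction also gives the following''), descending from $s+1$ to $s$ via $\Cau:(C_s)_\cs\to C_{s+1}$ and $\ddbzs:C_{s+1}\to C_s$, exactly as in Lemma~\ref{L:2.5}. The one genuine difference is your treatment of the base case $-1<s<0$: the paper's pattern (starting Lemma~\ref{L:2.5} at ``$k=-1$'') points to pushing up once more to $0<s+1<1$ and using the defining little-$o$ property of $\lip(s+1)$ on shrinking balls, whereas you stay at level $s$ and argue directly from the density of $\Test$ in $C_s$ together with the explicit Poisson decay $\|\phi_r h\|_s\lesssim r^{-s}\|h\|_\infty$; your route is self-contained and sidesteps the constant-subtraction caveat the paper flags in the Remark after Lemma~\ref{L:2.5}. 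One small bookkeeping point: Lemma~\ref{L:2.5} is \emph{stated} only for $k\in\N$, but its proof begins at $k=-1$, so the order-$2$ strong module estimate you invoke for $-1<s<0$ is indeed established there.
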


\begin{remark}
We note that since 
$\Vit_\phi(f)$ is holomorphic off the support of $\phi$
and has a zero at $\infty$, $\Vit_\phi$ maps $C_s$ into $C_s$.
\end{remark}

\subsection{Estimate for $\langle\phi,f\rangle$}
The strong module property gives an estimate
for the action of $f\in F$ on a given
$\phi\in\Test$:
\begin{lemma}\label{L:general-estimate}
Suppose $F$ is an SCBS with the order $k$
strong module property. Then for each compact
$X\subset\C$, there exists $K>0$ such that
$$|\langle\phi,f\rangle| \le K\cdot
N_k(\phi)\cdot\|f\|_F, $$
whenever $\phi\in\Test$,
$f\in F$ and $\spt(\phi\cdot f)\subset X$.
\end{lemma}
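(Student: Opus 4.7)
The plan is straightforward: reduce the pairing $\langle\phi,f\rangle$ to a pairing that is controlled jointly by the continuity of the embedding $F\hookto\Test^*$ and by the strong module property. The work is in localising the test function $\phi$ to a fixed compact neighbourhood of $X$ without changing the value of $\langle\phi,f\rangle$.

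First I would fix, once and for all, two auxiliary test functions depending only on $X$: an $\eta\in\Test$ with $\eta\equiv1$ on an open neighbourhood of $X$, and a $\chi\in\Test$ with $\chi\equiv1$ on an open neighbourhood of $\spt(\eta)$. Next I would check the two identities
\[
\langle\phi,f\rangle=\langle\eta\phi,f\rangle
\quad\textup{and}\quad
\langle\eta\phi,f\rangle=\langle\chi,\eta\phi\cdot f\rangle.
\]
For the first, apply $(1-\eta)\phi\cdot f$ to a test function equal to $1$ on $\spt\phi$: since $1-\eta$ vanishes on a neighbourhood of $\spt(\phi\cdot f)\subset X$, we have $(1-\eta)\phi\cdot f=0$ as a distribution, which forces $\langle(1-\eta)\phi,f\rangle=0$. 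The second identity is just the definition of the distribution $\eta\phi\cdot f$ applied to the test function $\chi$, combined with the fact that $\chi\eta\phi$ and $\eta\phi$ differ only where $\chi-1\ne0$, a set disjoint from $\spt(\eta\phi\cdot f)\subset X$.

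With the pairing rewritten as $\langle\chi,\eta\phi\cdot f\rangle$, I would invoke the continuous inclusion $F\hookto\Test^*$ (part of the SCS axioms) to obtain a constant $K_1=K_1(\chi)$, depending only on $X$, with
\[
|\langle\chi,g\rangle|\le K_1\|g\|_F,\qquad\forall g\in F.
\]
Applied to $g=\eta\phi\cdot f$ together with two applications of the strong module property (first to $\eta\phi\cdot f$, then using the Leibnitz estimate $N_k(\eta\phi)\le2^kN_k(\eta)N_k(\phi)$) we get
\[
|\langle\phi,f\rangle|\le K_1\|\eta\phi\cdot f\|_F\le K_1\cdot K\cdot 2^kN_k(\eta)\cdot N_k(\phi)\cdot\|f\|_F,
\]
so the conclusion holds with a constant depending only on $X$ (via $\eta$ and $\chi$) and on the strong module constant of $F$.

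I don't expect a real obstacle here: the only subtlety is the elementary but easily-mishandled localisation argument showing $\langle\phi,f\rangle=\langle\chi,\eta\phi\cdot f\rangle$, which relies crucially on the hypothesis $\spt(\phi\cdot f)\subset X$ rather than on any control of $\spt\phi$ or $\spt f$ individually. Once that replacement is in hand, the estimate is immediate from the two structural properties of an SCBS invoked above.
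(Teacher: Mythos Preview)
Your proof is correct and follows essentially the same approach as the paper's: rewrite $\langle\phi,f\rangle$ as $\langle\chi,\phi\cdot f\rangle$ for a fixed test function $\chi$ equal to $1$ near $X$, then apply the continuity of $g\mapsto\langle\chi,g\rangle$ together with the strong module property. The paper's version is slightly more streamlined in that it uses only one auxiliary cutoff $\chi$ rather than your pair $\eta,\chi$ (your intermediate step $\langle\phi,f\rangle=\langle\eta\phi,f\rangle$ is unnecessary, since $\langle\chi,\phi\cdot f\rangle=\langle\chi\phi,f\rangle=\langle\phi,f\rangle$ already holds directly once $\chi=1$ near $\spt(\phi\cdot f)$), but the idea is identical.
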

\begin{proof}
Fix $\chi\in\Test$ with $\chi=1$ near $X$.
Then since $f\mapsto\langle\chi, f\rangle$ is continuous
there exists $K>0$ such that 
$$|\langle\chi,f\rangle| \le K\cdot\|f\|_F. $$
Thus
$$|\langle\phi,f\rangle| = |\langle\chi,\phi\cdot f|\le K\cdot
N_k(\phi)\cdot\|f\|_F. $$
\end{proof}

Putting it another way, the order $k$ strong module
property says that the operator $f\mapsto \phi\cdot f$
on $F$ has operator norm dominated by $N_k(\phi)$,
and this last lemma says that the functional
$f\mapsto \langle\phi,f\rangle$ has $F(X)^*$
norm dominated by $N_k(\phi)$.  

It turns out that we can improve substantially
on this estimate, for our particular spaces.
The trick is to pay close attention to
the support of $\phi\cdot f$.

%\subsection{An extra factor}
%First, for $-1<s<0$ and $F=T_s$, we can put
%an extra factor $d(\phi)$
%on the right-hand-side.  To avoid clutter,
%we consider the case $X=\B(0,1)$.
%\begin{lemma}\label{L:extra-d}
%Let $-1<s<0$. Then 
%$$|\langle\phi,f\rangle| \le K\cdot d(\phi)\cdot N_1(\phi)\cdot \|f\|_s,
%$$ 
%whenever $\phi\in\Test$ and $\spt(\phi\cdot f)\in\B(0,1)$.
%\end{lemma}
%\begin{proof}
%If $\mu$ is a (complex Radon) measure with support in
%$\B(0,2)$, then it acts continuously on $C^0$,
%and hence on $\Lip(s+1)$, hence its Cauchy transform
%$\Cau(\mu)$ acts on $T_s=D\Lip(s+1)$, with $T_s^*$-norm
%controlled by $\|\mu\|$, the total variation of $\mu$.  
%For $\phi\in\Test$,  we can apply this to $\ddbzs\phi$,
%and conclude that
%$$ \|\phi\|_{T_s^*} \le K\cdot \int\left|
%\dd{\phi}{\bar z} \right| dxdy
%\le
%K \cdot d(\phi)^2\cdot \sup|\nabla\phi|
%=K\cdot d(\phi)\cdot N_1(\phi). 
%$$
%This gives the result.
%\end{proof}

% Same argument with $\frac{\bar z}{\pi z}$
%gives another proof that of Lemma \ref{L:general-estimate}
%for the case $-2<s<-1$. 

\subsection{Scaling: better estimate}
The action of an affine map $A:\R^d\to\R^d$ on
distributions is defined by
$$ \langle\phi,f\circ A\rangle := 
 |A|^{-1}\langle\phi\circ A^{-1},f\rangle,
\ \forall\phi\in\Test.$$ 
In the case of a dilation $A(z)=r\cdot z$ on $\C$,
this means that
$$ 
 \langle\phi,f\rangle
=r^2\langle\phi\circ A,f\circ A\rangle. 
$$ 
Taking into account the fact that $N_k(\phi)=N_k(\phi\circ A)$
whereas the identity
$$ (P_t*f)(z) = (P_{t/r}*(f\circ A))(rz) $$
gives 
$$\|f\circ A\|_s = r^s \|f\|_s,$$ 
we obtain:
\begin{lemma}\label{L:extra-d}%the label refers to something no longer there!
Let $-2<s<0$. Then 
$$|\langle\phi,f\rangle| \le 
K\cdot N_{3}(\phi)\cdot \|f\|_s
\cdot r^{s+2},
$$ 
whenever $\phi\in\Test$ and $\spt(\phi\cdot f)\in\B(0,r)$.
\qed
\end{lemma}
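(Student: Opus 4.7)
The plan is to reduce to the unit-scale case $r=1$ by dilation, and then appeal to the general estimate in Lemma \ref{L:general-estimate}. To set up the base case, I note that for each $s\in(-2,0)\setminus\{-1\}$ the space $(T_s)_\infty$ has the strong module property with order at most $3$, by Lemma \ref{L:4} (order $2$ in $(-1,0)$, order $3$ in $(-2,-1)$; in either case $N_2\le N_3$). Thus Lemma \ref{L:general-estimate} applied with $F=(T_s)_\infty$ and $X=\B(0,1)$ produces a constant $K>0$ such that
$$|\langle \psi, g\rangle| \le K\, N_3(\psi)\, \|g\|_{(T_s)_\infty} \le K\, N_3(\psi)\, \|g\|_s$$
for all $\psi\in\Test$ and $g\in T_s$ with $\spt(\psi\cdot g)\subset\B(0,1)$. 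The second inequality uses $\|g\|_{(T_s)_\infty}\le\|g\|_s$, which holds because $g$ itself is always an admissible extension of its own restriction to any unit ball.

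To pass from this base estimate to general $r>0$, apply the dilation $A(z)=r\cdot z$, setting $\tilde\phi:=\phi\circ A$ and $\tilde f:=f\circ A$. The routine identity $(\phi\cdot f)\circ A=(\phi\circ A)\cdot(f\circ A)$, immediate from the definition of composition for distributions and the module action, preserves the support condition:
$$\spt(\tilde\phi\cdot\tilde f)=A^{-1}\bigl(\spt(\phi\cdot f)\bigr)\subset A^{-1}(\B(0,r))=\B(0,1).$$
Combining the base estimate with the three scaling identities already recalled in the text just before the lemma --- namely $\langle\phi,f\rangle=r^2\langle\tilde\phi,\tilde f\rangle$, $N_3(\tilde\phi)=N_3(\phi)$, and $\|\tilde f\|_s=r^s\|f\|_s$ --- yields
$$|\langle\phi,f\rangle|=r^2\,|\langle\tilde\phi,\tilde f\rangle|\le K\,r^2\,N_3(\tilde\phi)\,\|\tilde f\|_s = K\,N_3(\phi)\,\|f\|_s\,r^{s+2},$$
which is the desired estimate.

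There is no serious obstacle here; the entire argument is a one-line scaling reduction built on top of Lemmas \ref{L:4} and \ref{L:general-estimate}. The only small points to verify are the interaction of the pullback by $A$ with multiplication by a test function, and the bookkeeping of the strong module property order across the two subintervals $(-2,-1)$ and $(-1,0)$, so that $N_3$ is large enough in both regimes.
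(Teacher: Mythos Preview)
Your proof is correct and follows exactly the approach the paper intends: the text immediately preceding the lemma sets up the dilation $A(z)=rz$ and records the three scaling identities $\langle\phi,f\rangle=r^2\langle\phi\circ A,f\circ A\rangle$, $N_k(\phi\circ A)=N_k(\phi)$, and $\|f\circ A\|_s=r^s\|f\|_s$, and the base case $r=1$ is precisely Lemma~\ref{L:general-estimate} combined with the strong module property from Lemma~\ref{L:4}. Your extra care in distinguishing the module orders on $(-1,0)$ and $(-2,-1)$ and in noting $\|g\|_{(T_s)_\infty}\le\|g\|_s$ simply makes explicit what the paper leaves implicit.
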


Note that for $f\in C_s$, the norm of $f$ in $(T_s)_{\B(a,r)}$
tends to zero as $r\downarrow0$, so we can replace
the constant $K$ in the estimate by $\eta(r)$,
where $\eta$ depends on $f$, and $\eta(r)\to0$ as
$r\downarrow0$.

\subsection{Hausdorff content estimate}
Next, using a covering argument, we can bootstrap
the estimate to:
\begin{lemma}\label{L:M-beta}
Let $-2<s<0$. Then 
$$|\langle\phi,f\rangle| \le K\cdot N_3(\phi)\cdot \|f\|_s
\cdot M^{s+2}(\spt(\phi\cdot f)),
$$ 
whenever $\phi\in\Test$ and $f\in T_s$.
\qed
\end{lemma}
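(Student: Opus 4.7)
The plan is to bootstrap Lemma \ref{L:extra-d}, which handles the case $\spt(\phi\cdot f)\subset\B(0,r)$, to arbitrary compact supports by means of a Vitushkin-style partition of unity adapted to a near-optimal Hausdorff-content cover. Throughout, set $X:=\spt(\phi\cdot f)$; this is compact since $\phi\in\Test$.

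The estimate is vacuous if $M^{s+2}(X)=+\infty$, so fix $\epsilon>0$ and pick a cover of $X$ by closed balls $\{\B(a_n,r_n)\}$ with $\sum_n r_n^{s+2}\le M^{s+2}(X)+\epsilon$; a finite subcover exists by compactness of $X$, and by inflating the radii infinitesimally we may assume $X$ lies in the union of the corresponding open balls. Fix a bump $\eta\in\Test$ with $\eta=1$ on $\B(0,1)$ and $\spt(\eta)\subset\B(0,2)$, set $\eta_n(z):=\eta((z-a_n)/r_n)$, and define the greedy products
$$\psi_n:=\eta_n\prod_{m<n}(1-\eta_m).$$
Then $\spt(\psi_n)\subset\B(a_n,2r_n)$, the telescoping identity $\sum_n\psi_n=1-\prod_m(1-\eta_m)$ shows $\sum_n\psi_n\equiv 1$ on an open neighbourhood of $X$, and since $N_k$ is invariant under the dilations used, a Besicovitch-multiplicity refinement of the cover yields a uniform bound $N_3(\psi_n)\le K$.

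Decompose $\phi=\bigl(1-\textstyle\sum\psi_n\bigr)\phi+\sum\psi_n\phi$. The residual factor $(1-\sum\psi_n)$ vanishes on an open neighbourhood of $X=\spt(\phi f)$, so the standard support principle for distributions forces $(1-\sum\psi_n)\phi\cdot f=0$, and hence $\bigl\langle(1-\sum\psi_n)\phi,f\bigr\rangle=0$. For each $n$, $\spt((\psi_n\phi)\cdot f)\subset\B(a_n,2r_n)$, so the translation-invariant form of Lemma \ref{L:extra-d} combined with the submultiplicative estimate $N_3(\psi_n\phi)\le 2^3\cdot N_3(\psi_n)\cdot N_3(\phi)\le K\cdot N_3(\phi)$ gives
$$|\langle\psi_n\phi,f\rangle|\le K\cdot N_3(\phi)\cdot \|f\|_s\cdot(2r_n)^{s+2}.$$
Summing over $n$ and absorbing the factor $2^{s+2}$ into $K$,
$$|\langle\phi,f\rangle|\le K\cdot N_3(\phi)\cdot\|f\|_s\cdot\sum_n r_n^{s+2}\le K\cdot N_3(\phi)\cdot\|f\|_s\cdot\bigl(M^{s+2}(X)+\epsilon\bigr),$$
and letting $\epsilon\downarrow 0$ produces the claim.

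The main obstacle is the uniform estimate $N_3(\psi_n)\le K$. Derivatives of the greedy product on $\B(a_n,2r_n)$ pick up factors $r_m^{-j}$ from every larger ball $\B(a_m,2r_m)$ meeting $\B(a_n,2r_n)$, and these must be absorbed into powers of $r_n^{-1}$ at the cost of a constant. This is a standard Vitushkin manoeuvre, but it genuinely requires restricting beforehand to a cover of bounded multiplicity (via Besicovitch in the plane); without that refinement the sums defining the derivatives of $\prod(1-\eta_m)$ are not controllable in a scale-invariant way, and the whole estimate collapses.
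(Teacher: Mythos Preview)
Your overall architecture is exactly the paper's: cover $X=\spt(\phi\cdot f)$ near-optimally for $M^{s+2}$, build a partition of unity subordinate to the cover with \emph{scale-invariant} $N_3$ bounds, and feed each piece into Lemma \ref{L:extra-d}. The reduction $\langle(1-\sum\psi_n)\phi,f\rangle=0$ and the use of submultiplicativity of $N_3$ are fine.

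The gap is precisely the step you yourself flag: the uniform bound $N_3(\psi_n)\le K$. Calling this ``a standard Vitushkin manoeuvre via Besicovitch'' is not a proof, and as stated it does not quite work. Two problems. First, the Besicovitch covering theorem applies to families $\{\B(x,r(x)):x\in X\}$ with centres \emph{in} $X$; your balls $\B(a_n,r_n)$ come from an arbitrary near-optimal cover and need not have $a_n\in X$. You can repair this by recentring (for each $x\in X$ pick a ball of the cover containing it and use $\B(x,2r_{n(x)})$), but then you must argue that the Besicovitch subfamily still has $\sum r^{s+2}$ controlled by the original sum; this uses bounded overlap once more, and you have not said it. Second, even with bounded overlap of the balls, the derivative bound for the greedy product requires that the balls be ordered by \emph{nonincreasing} radius, so that every factor $(1-\eta_m)$ active on $\spt\eta_n$ has $r_m\ge r_n$ and hence $|\nabla^j\eta_m|\lesssim r_n^{-j}$; you hint at this (``larger ball'') but never state the ordering, and without it the estimate is simply false.

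The paper avoids both issues by passing at the outset from balls to dyadic squares (losing only a fixed constant in the content), and then devoting a separate lemma (Lemma \ref{L:partition}) to the partition of unity. That lemma groups the squares into generations $\mathcal G_m$ by side $2^{-m}$, builds partial sums $\tau_m$ generation by generation via $\tau_{m+1}=\sigma_{m+1}+(1-\sigma_{m+1})\tau_m$, and proves $|\nabla^k\tau_m|\le C_k 2^{km}$ by a double induction on $k$ and $m$. The dyadic structure replaces your appeal to Besicovitch: within a generation the overlap is automatically bounded, and across generations the geometric decay of scales makes the product-rule sums converge. So your plan is right, but the part you dismissed as standard is in fact the entire content of the lemma; either carry out the recentred-Besicovitch argument in full, or switch to dyadic squares as the paper does.
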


Before giving the proof of this lemma,
we need some preliminaries.

A \emph{closed dyadic square} is a set of the form
$I_{m,n}\times I_{r,n}$ (for integers $n,m,r$) where
$$ I_{m,n}:= \left\{x\in\R: \frac{m}{2^n}\le x\le
\frac{m+1}{2^n}\right\}.$$
Let $\mathcal{S}_2$ denote the family of 
closed dyadic squares.  For $\beta>0$, the \emph{
$\beta$-dimensional dyadic content}
of a set $E\subset\R^2$ is 
$$ M^\beta_2(E):= \inf\left\{
\sum_{n=1}^\infty
\textup{side}(S_n)^\beta:
E\subset \bigcup_{n=1}^\infty S_n,
\textup{ and } S_n\in\mathcal{S}_2
\right\}.
$$
This content is comparable to $M^\beta$:
$$ M^\beta(E)\le 2^{\beta/2}M_2^\beta(E),\quad
M_2^\beta(E)\le 2^{\beta+2} M^\beta(E), $$
for all bounded sets $E$.

Thus it suffices to prove Lemma \ref{L:M-beta} with
$M^{s+2}$ replaced by $M_2^{s+2}$ in the statement.

Also, since both sides change by
a factor $r^2$ when $f$ and $\phi$
are replaced by $f(r\cdot)$ and $\phi(r\cdot)$,
it suffices to consider the case
when $E:=\spt(\phi\cdot f)$ has diameter
at most $1$.  Given $\epsilon>0$, we may cover 
such an $E$ by a countable sequence
$(S_n)_n$ of dyadic squares, of side at
most $1$, with
$$ \sum_{n=1}^\infty \side(S_n)^\beta < M_2^\beta+\epsilon.$$

We now state the key partition-of-identity lemma:

\begin{lemma}\label{L:partition}
Let $k\in\N$ be given.
There exist positive constants $K$ and $\Lambda$
such that
whenever
$E\in\R^2$ is compact, and 
$$ E \subset  
\bigcup_{n=1}^\infty S_n,$$
where the $S_n$ are dyadic squares of side
at most $1$, there exists a sequence of
test functions $(\phi_n)_n$ 
such that 
\\ (1) $\phi_n=0$ except for finitely many $n$;
\\ (2) $\sum_n\phi_n=1$ on a neighbourhood of $E$;
\\ (3) $N_k(\phi_n)\le K$ for all $n$,
and 
\\ (4) $\spt(\phi_n)\subset \Lambda S_n$.
\end{lemma}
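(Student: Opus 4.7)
The plan is to construct the $\phi_n$ by selecting a Whitney-type subcollection of the given $S_n$ and normalizing scaled bump functions. Since $E$ is compact I first extract a finite subcover of $\{S_n\}$ and set $\phi_n=0$ for all excluded $n$, giving (1). Because any two dyadic squares are either nested or have disjoint interiors, restricting to the subfamily $\mathcal{M}$ of maximal elements of this finite subcover yields a family with pairwise disjoint interiors that still covers $E$. A naive partition-of-unity construction on $\mathcal{M}$ would fail condition (3): a small $Q\in\mathcal{M}$ adjacent to a much larger $Q'\in\mathcal{M}$ would force derivatives of the normalized bump for $Q'$ onto the scale of $Q$, giving $N_k$ of order $(\side(Q')/\side(Q))^k$.

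To remedy this I would Whitney-refine. Fix a large constant $\Lambda$ and a constant $C\ge 3$, and let $\mathcal{S}$ consist of those $Q\in\mathcal{M}$ for which no $Q'\in\mathcal{M}$ with $\Lambda Q'\cap\Lambda Q\ne\emptyset$ has $\side(Q')>C\cdot\side(Q)$. Iterating the absorbing relation (each absorbed square has a strictly larger neighbor, so the chain terminates in $\mathcal{S}$ after finitely many steps, with geometric growth in sides) shows that every $Q\in\mathcal{M}\setminus\mathcal{S}$ lies inside $\Lambda Q^*$ for a suitable $Q^*\in\mathcal{S}$, so $\bigcup_{Q\in\mathcal{S}}\Lambda Q\supset E$. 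By construction, any two $Q,Q'\in\mathcal{S}$ with $\Lambda Q\cap\Lambda Q'\ne\emptyset$ have sides within ratio $C$, whence the family $\{\Lambda Q\}_{Q\in\mathcal{S}}$ has uniformly bounded overlap multiplicity.

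I then fix a prototype $\eta_0\in\Test$ with $\eta_0=1$ on a large neighborhood of $[0,1]^2$ (enough to contain every absorbed subsquare of the unit square after scaling) and $\spt\eta_0\subset\Lambda\cdot[0,1]^2$. Scaling and translating produces $\eta_Q$ for each $Q\in\mathcal{S}$, with $\spt\eta_Q\subset\Lambda Q$, $\eta_Q=1$ on a suitable enlargement of $Q$ that contains all absorbed neighbors, and $N_k(\eta_Q)=N_k(\eta_0)$ by the scale-invariance of $N_k$. Setting $E_\eta:=\sum_{Q\in\mathcal{S}}\eta_Q\ge 1$ on a neighborhood of $E$, choosing $\chi\in\Test$ equal to $1$ on that neighborhood and supported where $E_\eta\ge 1/2$, I define $\phi_n:=\chi\cdot\eta_Q/E_\eta$ when $S_n=Q\in\mathcal{S}$ and $\phi_n=0$ otherwise. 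Properties (2) and (4) are then immediate.

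The main obstacle is condition (3). By Leibniz and Fa\`a di Bruno, $\nabla^j\phi_n$ for $j\le k$ is a sum of products of derivatives of $\chi$, $\eta_Q$, and $1/E_\eta$. The first factor is bounded independently of $Q$, and $|\nabla^j\eta_Q|\le K\cdot\side(Q)^{-j}$ by scaling. The Whitney refinement ensures that at any $x\in\Lambda Q$, every $\eta_{Q'}$ contributing to $E_\eta(x)$ has $\side(Q')$ comparable to $\side(Q)$, so $|\nabla^jE_\eta|\le K\cdot\side(Q)^{-j}$ and, since $E_\eta\ge 1/2$ on $\spt\chi$, also $|\nabla^j(1/E_\eta)|\le K\cdot\side(Q)^{-j}$. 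Combined with $d(\phi_n)\le\Lambda\sqrt2\,\side(Q)$, these estimates yield $N_k(\phi_n)\le K$. The crux is thus the geometric content of the Whitney refinement: verifying that $\mathcal{S}$ covers $E$ under $\Lambda$-enlargements and that $\{\Lambda Q\}_{Q\in\mathcal{S}}$ has bounded multiplicity; everything else follows routinely from scale-invariance.
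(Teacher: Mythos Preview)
Your route is genuinely different from the paper's. The paper does \emph{not} discard dominated squares; instead it keeps (essentially) all the covering squares, groups them by generation $\mathcal{G}_m$ (side $2^{-m}$), and builds the partition telescopically: with $\sigma_m=\sum_{S\in\mathcal{G}_m^+}\psi_S$ a standard scale-$2^{-m}$ bump sum, it sets $\tau_0=\sigma_0$ and $\tau_{m+1}=\tau_m+(1-\tau_m)\sigma_{m+1}$, and the $\phi_T$ for $T\in\mathcal{G}_{m+1}$ are the pieces of $(1-\tau_m)\sigma_{m+1}$. The technical heart is an induction (on $k$ and $m$) showing $|\nabla^k\tau_m|\le C\,2^{km}$, which is exactly what handles the interaction between different scales. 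Your Whitney-refinement strategy trades that induction for a geometric selection step, after which a single normalization $\eta_Q/E_\eta$ suffices; this is arguably cleaner, and discarding squares is harmless for the application to Lemma~\ref{L:M-beta} since it only decreases $\sum\side(S_n)^{s+2}$.

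There is, however, a real gap in your chain argument: you cannot use the \emph{same} constant $\Lambda$ both to define the overlap relation for $\mathcal{S}$ and to contain the absorbed squares. If $Q=Q_0,Q_1,\dots,Q_m=Q^*$ is your chain with $\Lambda Q_i\cap\Lambda Q_{i+1}\ne\emptyset$ and $\side(Q_{i+1})>C\,\side(Q_i)$, then in $\ell^\infty$ the centers satisfy
\[
\|c_{Q_0}-c_{Q_m}\|_\infty \;\le\; \tfrac{\Lambda}{2}\sum_{i=0}^{m-1}\bigl(\side(Q_i)+\side(Q_{i+1})\bigr)
\;\le\; \Lambda\,\side(Q_m)\,\tfrac{C}{C-1},
\]
and since $\tfrac{C}{C-1}>\tfrac12$ for every $C>1$, this never gives $Q_0\subset\Lambda Q_m$. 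The fix is standard but must be made explicit: define $\mathcal{S}$ using a smaller overlap parameter $\lambda$, deduce $Q_0\subset\mu Q^*$ for some $\mu=\mu(\lambda,C)$, and only then choose $\Lambda>\mu$ and the prototype $\eta_0$ with $\eta_0=1$ on the $\mu$-enlargement and $\spt\eta_0$ in the $\Lambda$-enlargement. You should also check that the comparability of neighbors in $\mathcal{S}$ (proved with the $\lambda$-overlap) still yields bounded overlap of $\{\Lambda Q\}_{Q\in\mathcal{S}}$; it does, with a ratio depending on $\Lambda/\lambda$. Finally, ``choosing $\chi\in\Test$ equal to $1$ near $E$ and supported where $E_\eta\ge\tfrac12$'' does not by itself give scale-controlled derivatives of $\chi$ on $\Lambda Q$; you need a concrete construction such as $\chi=\rho\circ E_\eta$ for a fixed $\rho\in C^\infty(\R)$ with $\rho=1$ on $[1,\infty)$ and $\rho=0$ on $(-\infty,\tfrac12]$, so that $\nabla^j\chi$ inherits the bound $|\nabla^j E_\eta|\le K\,\side(Q)^{-j}$.
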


Here $\Lambda S$ denotes the square with the same
centre as $S$ and $\Lambda$ times the side.
We shall show that $\Lambda$ may be taken equal to $5$,
although we do not claim this is sharp.
\begin{proof}
Rearrange the $S_n$ in nonincreasing order of size.
The interiors of the $\frac54 S_n$ form an open covering
of $E$, so we may select a finite subcover,
$$\mathcal{F}:= 
\left\{ \textstyle\frac54 \mathring{S_n}: 1\le n\le N\right\}.$$ 
Remove all squares from the sequence $(S_n)_n$ that are contained
in $(\frac54S_1) \setminus S_1$, re-number the remaining squares,
and adjust $n$; then remove all 
in $(\frac54S_2) \setminus S_2$, and so on.
Now
no element $S_n\in\mathcal{F}$ is contained in
any square \lq cordon' $(\frac54S_m)\setminus S_m$.

Group the squares of $\mathcal{F}$ into generations
$$ \mathcal{G}_m:= \left\{
S\in\mathcal{F}: \side(S)=2^{-m}
\right\} $$
for $m=0,1,2,\ldots$. 

Each (finite) generation $\mathcal{G}_m$ forms
part of the tesselation $\mathcal{T}_m$ of the whole plane by
dyadic squares of side $2^{-m}$.  We can construct
a uniform partition of unity on the whole plane subordinate to the
covering by the open squares $5\mathring{S}$, with
$S\in\mathcal{T}_m$, as follows:  

Choose $\rho\in C^{\infty}([0,+\infty))$ such that
$\rho$ is nonincreasing, $\rho(r)=1$ for $0\le r\le\frac58$
and $\rho(r)=0$ for $r\ge\frac34$. Then define
$$\theta(x,y):= \rho(x)\rho(y). $$
For a dyadic square $S$ having centre $(a,b)$
and side $1$, define 
$\theta_S(x,y):= \theta(x-a,y-b)$, and
$$\tau:= \sum_{S\in\mathcal{T}_1} \theta_S. $$
Then $\theta_S=1$ on $\frac54S$ and is supported
on $\frac32S$, so that $1\le\tau\le4$.  
Let
$$\psi_S:= \frac{\theta_S}{\tau}.  $$
Then the test functions $\psi_S$, for $S\in\mathcal{T}_1$
 form
a partition of unity, and $c_k:=N_k(\psi_S)$
is independent of $S$.  (This partition is invariant
under translation by Gaussian integers.)

For general $m\in\N$, and a dyadic square
$S$ of side $2^{-m}$, define
$\psi_S(z):= \psi_{2^mS}(2^mz)$.  Then 
the $\psi_S$, for $S\in\mathcal{T}_m$
also form a nonnegative smooth partition of unity, $N_k(\psi_S)=c_k$
is independent of $S$ (and $m$), 
the support of $\psi_S$ is contained in
$\frac32S$, hence at most $4$
$\psi_S$ are nonzero at any given point.  

Note that 
$$|\nabla^k\psi_S|\le \left(\textstyle\diam\frac32S\right)^k c_k = 
\left(\textstyle\frac3{\sqrt2}\right)^k \cdot
(\side S)^k \cdot c_k $$
for each $k\in\N$.

For a dyadic square $S$, let $S^+$ denote
the set of $9$ dyadic squares of the same size
that meet $S$.  For any family $\mathcal{H}$
of dyadic squares, let
$$ \mathcal{H}^+:= \bigcup\{ S^+: S\in\mathcal{H}\}.$$
Thus $S^{++}:=(S^{+})^{+}$ is the family of $25$ squares,
consisting of $S$, the $8$ other dyadic squares 
of the same size that meet $S$,
and the $16$ other dyadic squares of the same size
that meet at least one of those $8$ squares.
Observe that the smooth function
$$ \sum_{T\in S^{+}} \psi_T $$
is supported in $\bigcup S^{++}=5S$ and has sum
identically $1$ on $\frac32S$.  

\smallskip
We now proceed to construct the desired collection
of functions $(\phi_n)$.  

Let 
$$\sigma_m:=\sum_{S\in \mathcal{G}_m^+} \psi_S.$$
Then $\sigma_m$ is supported in $\bigcup\mathcal{G}_m^{++}$
and 
$$ \sigma_m = 1 \textup{ on }
K_m:= \bigcup_{S\in\mathcal{G}_m} \textstyle\frac32S. $$ 
Since at most $4$ $\psi_S$ are nonzero at any one point, 
we have 
\begin{equation}\label{E:sigma-bound}
|\nabla^k\sigma_m| \le
4c_k\left(\textstyle\frac3{\sqrt2}\right)^k\cdot 2^{km}, \ \forall k\in\N. 
\end{equation}

Now take the squares $S\in\mathcal{G}_0^+$,
and allocate each one to a \lq nearest' square 
$n(S)\in\mathcal{G}_0$
so that: 
\\
(1) if $S\in\mathcal{G}_0$, take $n(S)=S$;
\\
(2) if $S\not\in\mathcal{G}_0$, pick $n(S)$
with $S\in n(S)^+$.  (There may be up to
eight ways to pick $n(S)$. It does not matter
which you choose.)

Next, let 
$$\phi_T:= \sum_{n(S)=T} \psi_S, \ \forall T\in\mathcal{G}_0. $$
Then each $\phi_T$ is supported on $5T$, and
$$ \sigma_0 = \sum_{T\in\mathcal{G}_0} \phi_T. $$
Since the sum defining $\phi_T$ has at most
$9$ terms, and its support has diameter at most
$5$ times that of $T$, we have 
$$N_k(\phi_T) \le 9\cdot5^k\cdot c_k.$$
Let $\tau_0:= \sigma_0$.

Next, consider $\mathcal{G}_1$. As before,
allocate each square $S\in\mathcal{G}_1^+$
to a nearest square $n(S)\in\mathcal{G}_1$, but this
time let
$$\phi_T:= (1-\tau_0)\sum_{n(S)=T} \psi_S, \ \forall T\in\mathcal{G}_1. $$
Then
$$ \sum_{T\in\mathcal{G}_1} \phi_T
= (1-\tau_0)\sigma_1.
$$
and
$$ \tau_1:= \tau_0 + (1-\tau_0)\sigma_1 $$
is supported in $\bigcup(\mathcal{G}_0^{++}\cup\mathcal{G}_1^{++})$
and is identically 
equal to $1$ on
$K_0\cup K_1$.

Continuing in this way, for $m\ge1$ we 
allocate each square $S\in\mathcal{G}_{m+1}^+$
to a nearest square $n(S)\in\mathcal{G}_{m+1}$, 
and let
$$\phi_T:= (1-\tau_m)\sum_{n(S)=T} \psi_S, \ \forall T\in\mathcal{G}_{m+1}, $$
and
$$ \tau_{m+1} := 
 \tau_{m}+ (1-\tau_m)\sigma_{m+1}.
$$
Then 
$$ \sum_{T\in\mathcal{G}_{m+1}} \phi_T = (1-\tau_m)\sigma_{m+1} $$
and $\tau_{m+1}=1$ on $K_0\cup\cdots\cup K_{m+1}$.

When we have worked through all the nonempty
generations $\mathcal{G}_m$, we will have defined
$\phi_{S_n}$ for each $S_n$, and 
(renaming $\phi_{S_n}$ as $\phi_n$)
we have $\sum_n\phi_n =1$ on the union
of all the $\frac32S_n$, and hence on a neighbourhood of
$E$.  Since $\phi_n$ is supported on $5S_n$,
it remains to prove the estimate (3) of the statement,
i.e. to prove that $\sup_nN_k(\phi_n)<+\infty$.

This amounts to showing that there is
a constant $K>0$ (depending on $k$) such that
for $0\le m\in\Z$ and $S\in\mathcal{G}_m$,
we have
$$  | \nabla^k \phi_S | \le K 2^{km}. $$
To do this, we start by
proving that for each $k$ there exists $C=C_k>0$ such that
\begin{equation}\label{E:tau-bound}
|\nabla^k \tau_m| \le  
C\cdot2^{km}, \ \forall k\in\N. 
\end{equation}
for all $m\ge0$.

To see this, we use induction on $k$ 
and on $m$, and
the identity
\begin{equation}\label{E:identity}
 \tau_{m+1} = \sigma_{m+1} + (1-\sigma_{m+1})\tau_m, 
\end{equation}
together with the bound \eqref{E:sigma-bound}.

Let us write
$$d_k:= 4c_k\left(\textstyle\frac3{\sqrt2}\right)^k,$$
so that \eqref{E:sigma-bound} becomes
$ |\nabla^k\sigma_m| \le d_k 2^{km}$. 
Since $\tau_0=\sigma_0$, then 
for any $k$, we 
know that \eqref{E:tau-bound}
holds for $m=0$ as long as $C$ is at least $d_k$.

Take the case $k=1$, and proceed by induction on
$m$. If \eqref{E:tau-bound} holds for $k=1$ and some $m$, then the identity
\eqref{E:identity} gives
$$ |\nabla\tau_{m+1}| \le d_12^{m+1}
 + |\nabla\tau_m| + |\nabla\sigma_{m+1}|
\le \left(d_1 + \frac{C}{2} + d_1\right)2^{m+1}. $$
Thus we get \eqref{E:tau-bound} with $m$
replaced by $m+1$, as long as $C\ge4d_1$.  
This proves the case $k=1$, with $C_1=4d_1$.

Now suppose that $k>1$, and we have \eqref{E:tau-bound}
with $k$ replaced by any number $r$ from
$1$ to $k-1$ and $C$ replaced by some $C_r$.  
We
proceed by induction on $m$. 
We have the case $m=0$, with any constant $C\ge d_k$.
Suppose we have the case $m$, with a constant $C$.

Using the identity, we can estimate $|\nabla^k\tau_{m+1}|$
by
$$ d_{k}2^{k(m+1)} + C\cdot2^{km} + \sum_{j=1}^k
\binom{k}{j}|\nabla^j\sigma_{m+1}|\cdot|\nabla^{k-j}\tau_m| 
.$$
This is no greater than
$$ \left( \frac{C}{2^k} + R\right) \cdot 2^{k(m+1)}, $$
where $R$ is an expression involving $d_1$,$\ldots$,$d_k$
and $C_1$,$\ldots$,$C_{k-1}$.  So as long as
$C>2R$, we get \eqref{E:tau-bound} with $m$
replaced by $m+1$, and the induction goes through.

So we have \eqref{E:tau-bound} for all $k$ and
$m$.  It follows easily that for some $C>0$
(depending on $k$) and for each $S\in\mathcal{G}_m^+$
we have 
$$| \nabla^k (1-\tau_m)\cdot \psi_S | \le C\cdot2^{km},$$
and this gives 
$$| \nabla^k \phi_S | \le 9C\cdot2^{km}$$
whenever $S\in\mathcal{G}_m$, as required.
\end{proof}

\begin{proof}[Proof of Lemma \ref{L:M-beta}]
With $E=\spt(\phi\cdot f)$, take 
the partition of the identity $(\phi_n)$ constructed
in Lemma \ref{L:partition},
and note that $\phi = \sum_n\phi\cdot\phi_n$
on a neighbourhood of $E$. 
Thus
$$ \langle \phi, f \rangle =
\sum_{n=1}^N \langle  \phi\cdot\phi_n, f\rangle. $$
Now apply Lemma \ref{L:extra-d} with $\phi$
replace by $\phi\cdot\phi_n$.
The fact that $N_k$ is submultiplicative
implies that $N_3(\phi\cdot\phi_n)\le KN_3(\phi)$, so we get
the stated result at once.
\end{proof} 

Further, using the remark about $\eta(r)\downarrow0$, we
get a stronger statement for elements of $C_s$:

\begin{lemma}\label{L:M-beta-*}
Let $-2<s<0$. Then 
$$|\langle\phi,f\rangle| \le K \cdot N_3(\phi)\cdot \|f\|_s
\cdot M^{s+2}_*(\spt(\phi\cdot f),
$$ 
whenever $\phi\in\Test$ and $f\in C_s$.
\qed
\end{lemma}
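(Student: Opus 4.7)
The plan is to run the proof of Lemma \ref{L:M-beta} once more, but to feed the covering estimate through the sharper version of Lemma \ref{L:extra-d} that is available for $f\in C_s$. The remark following Lemma \ref{L:extra-d} provides, for each $f\in C_s$, a function $\eta_f:(0,1]\to[0,\infty)$ with $\eta_f(r)\to 0$ as $r\downarrow 0$ and $\eta_f(r)\le K\|f\|_s$, such that
$$|\langle\psi,f\rangle|\le \eta_f(r)\,N_3(\psi)\,r^{s+2}$$
whenever $\spt(\psi\cdot f)\subset\B(a,r)$. Replacing $\eta_f$ by a nondecreasing majorant if necessary, I may assume it is nondecreasing and vanishing at $0$.

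Following the proof of Lemma \ref{L:M-beta}, I scale so that $E:=\spt(\phi\cdot f)$ has diameter at most $1$, pass to the dyadic version of the content via the standard comparability, and cover $E$ by dyadic squares $(S_n)_n$ of side at most $1$. Lemma \ref{L:partition} supplies a finite partition of unity $(\phi_n)_n$ with $\sum_n\phi_n\equiv 1$ near $E$, $\spt(\phi_n)\subset 5S_n$, and $N_3(\phi\cdot\phi_n)\le KN_3(\phi)$. Applying the sharper estimate to each $\phi\cdot\phi_n$ (whose support sits in a ball of radius comparable to $\side(S_n)$) and summing gives
$$|\langle\phi,f\rangle|\le K\,N_3(\phi)\sum_n \eta_f\bigl(K\side(S_n)\bigr)\,\side(S_n)^{s+2}.$$

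To convert this sum into a multiple of $\|f\|_s\cdot M^{s+2}_*(E)$, I set
$$\tilde h(r):=\frac{\eta_f(Kr)}{K\|f\|_s}\,r^{s+2},$$
extended by $\tilde h(0)=0$. Since $\eta_f$ is nondecreasing and $s+2>0$, $\tilde h$ is nondecreasing; the bound $\eta_f\le K\|f\|_s$ gives $\tilde h(r)\le r^{s+2}$; and $r^{-(s+2)}\tilde h(r)=\eta_f(Kr)/(K\|f\|_s)\to 0$. Thus $\tilde h$ is admissible in the definition of $M^{s+2}_*$, and the sum above equals $K\|f\|_s\sum_n\tilde h(\side S_n)$, which by the standard comparison between dyadic-square and ball coverings is bounded by $K\|f\|_s\,M_{\tilde h}(E)\le K\|f\|_s\,M^{s+2}_*(E)$. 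Taking the infimum over the covering chosen yields the claimed estimate.

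The main technical nuisance is bookkeeping: organizing the $\eta_f$ and the scaling constants so that $\tilde h$ is a genuine admissible test function for $M^{s+2}_*$ — nondecreasing, dominated by $r^{s+2}$, and with vanishing ratio at the origin. The dyadic/ball comparison and the submultiplicativity of $N_3$ are routine once this is arranged, and everything else is essentially a verbatim repetition of the proof of Lemma \ref{L:M-beta}.
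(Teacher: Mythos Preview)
Your approach is exactly what the paper intends: rerun the covering argument of Lemma~\ref{L:M-beta}, feeding in the $\eta_f(r)$ from the remark after Lemma~\ref{L:extra-d}, and recognise the resulting covering sum as $M_{\tilde h}$ for an admissible $\tilde h$, whence $M_{\tilde h}(E)\le M^{s+2}_*(E)$.

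One small point to tighten in your bookkeeping: the ``standard comparison between dyadic-square and ball coverings'' you invoke is stated in the paper only for $h(r)=r^\beta$; for a general measure function it requires $\tilde h$ to be doubling, which does not follow from $\eta_f$ being merely nondecreasing. The fix is to replace $\eta_f$ by its least \emph{concave} majorant $\bar\eta_f$ on $[0,1]$ rather than just a nondecreasing one: $\bar\eta_f$ is still bounded by $K\|f\|_s$, still has $\bar\eta_f(0^+)=0$, and concavity with $\bar\eta_f(0)=0$ gives $\bar\eta_f(2r)\le 2\bar\eta_f(r)$, so the resulting $\tilde h$ is doubling and the dyadic/ball comparison goes through. (Your last two sentences are also written in a slightly inverted order --- take the infimum over coverings \emph{first} to reach $M_{\tilde h,2}(E)$, \emph{then} compare to $M_{\tilde h}(E)$ --- but the intent is clear.)
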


\section{Proofs of preliminary lemmas}\label{S:proofs}
\subsection{Proof of Lemma \ref{L:1}}
\begin{proof}
Fix $f\in A^s(U)$. Take some $\psi\in\mathcal{D}$
having $\psi=1$ near $\overline{U}$.  Then
$f_1:= \psi\cdot f\in A^s(U)$, so we may write
$f= f_1 + f_2$, where $f_2\in C_s$ vanishes near 
$\overline{U}$, and hence is holomorphic near
$b$. So it remains to show that we can
approximate $f_1$ by elements of $A^s_b(U)$.

Now $f_1$ has compact support. 
Take a
standard pincher 
$(\phi_n)_n$ 
at $b$.

Take $g_n:= T_{\phi_n}(f_1)$. Then $\|g_n\|_s\le K\|f_1\|_s$.
Since $T_{\phi_n}$ depends only on the restriction
of $f_1$ to the support of $\phi_n$, an application of
Lemmas \ref{L:3} and \ref{L:4} shows that $\|g_n\|_s\to0$ as
$n\uparrow\infty$. Thus $f_1-g_n\to f_1$ in $T_s$ norm.
Finally,
$$ \ddbz(f_1-g_n) = (1- \phi_n)
\dd{f_1}{\bar z}, $$
so $f_1-g_n$ is holomorphic on a neighbourhood of $b$,
and so belongs to $A^s_b(U)$. 
\end{proof}

\subsection{Proof of Lemma \ref{L:2}}
\label{SS:pf-L-2}

First, we have to explain the 
weak-star topology in question, by specifying
a specific predual for $T_s$.

The fact is that $T_s$ is essentially
the double dual of $C_s$. More, it is 
a \emph{concrete dual}:  An SCS $F$ is
called \emph{small} if it is the closure of
$\mathcal{D}$. If $F$ is a small SCS, then 
its dual $F^*$ is naturally isomorphic to an SCS,
where the isomorphism is the restriction map
$L\mapsto L|\mathcal{D}$.  We call this SCS
the concrete dual of $F$, and denote it by
the same symbol $F^*$.
Also $F_\loc$ and $F_\cs$ are also small, 
$$ (F_\loc)^* = (F^*)_\cs,$$
$$ (F_\cs)^* = (F^*)_\loc,$$
and so
$$ (F^*) \loceq (F_\loc)^* \loceq (F_\cs)^*
.
$$

In the case of $C_s$, for $0<s<1$, the
concrete dual $C_s^*$ is also small, and we have
$$ (C_s^*)^* \loceq T_s. $$
This fact is basically due to Sherbert,
who observed the isomorphism
$$ \lip(\alpha,K)^{**} = \Lip(\alpha,K) $$
for all compact metric spaces $K$.  The key to
this is the fact that for each $L\in \lip(\alpha,K)^*$
that annihilates constants there exists a measure
$\mu$ on $K\times K$, having no mass
on the diagonal, such that
$$ Lf = 
\int_{K\times K}
\frac{f(x)-f(y)}{\dist(x,y)^\alpha}
d\mu(x,y), $$
whenever $f\in\lip(\alpha,K)$. 
In particular, each point-mass at an
off-diagonal point $(z,w)\in\C\times\C$ gives
an element of the dual of $\lip\alpha(\C)$:
$$ Q(z,w)(f):= 
\frac{f(z)-f(w)}{|z-w|^\alpha},
\ \forall f\in\lip\alpha. $$
This might lead one to suspect that the dual
is non-separable, but the norm
topology on these functionals is not discrete.
In fact, the map
$$ P:\C^2\setminus\textup{diagonal} \to \lip\alpha^* $$
is continuous, and indeed locally H\"older-continuous:
one may show that
$$ \|Q(z,w) - Q(z',w)\|_{\lip\alpha^*}
\le \frac{4 |z-z'|^\alpha}{|z-w|} $$
whenever $|z-z'|<\half|z-w|$.
Hence the functional $L$ on $\lip\alpha_\loc$ represented by
a given measure $\mu$ may be approximated in the
dual norm by finite linear combinations of elements
from
$$\mathcal{P}:= \{ Q(z,w): z\not=w \}.
$$
By smearing the point masses,
each functional $Q(z,w)$ may be approximated in the dual norm by 
functionals $\int_\C Q(z+\zeta,w+\zeta)\phi(\zeta)dm(\zeta)$,
where $\phi\in\mathcal{D}$ has $\int\phi\,dm=1$, 
which send an element $f\in\lip(\alpha)$ to
$$
\int_\C Q(z+\zeta,w+\zeta)(f) \cdot \phi(\zeta) \,dm(\zeta) 
$$$$= 
\int_\C f(\omega) \left(
\frac{\phi(\omega-z)-\phi(\omega-w)}{|z-w|^\alpha}
\right)
dm(\omega),
$$
and the function
$$
\omega\mapsto  
\frac{\phi(\omega-z)-\phi(\omega-w)}{|z-w|^\alpha}
$$
is a test function, 
so the functional $L$ may be approximated by test functions. 
Thus $\lip\alpha_\loc^*$ has a concrete dual,
and by Sherbert's result this can only
be $\Lip\alpha_{cs}$.

Moreover, it follows that a sequence in $\Lip\alpha$
is weak-star convergent to zero if and only if it is
bounded in $\Lip\alpha$ norm and 
converges pointwise to zero on the span of $\mathcal{P}$.
So in fact it suffices to show that it is
bounded in norm and converges pointwise
on $\C$.  But we already know
that if $(\phi_n)$ is a standard pincher,
then, for $f\in\Lip\alpha$, $T_{\phi_n}f$
is bounded in $\Lip\alpha$ norm and
converges uniformly to zero, hence
we conclude that $T_{\phi_n}f$
is weak-star convergent to zero.
This proves the lemma in case $0<s<1$.

For other nonintegral $s$, we obtain it by
applying the Fundamental Theorem of Calculus.
In particular, for the case of immediate interest,
$-1<s<0$, we have that
$$ ((C_s)_\loc)^{**} \loceq ((DC_{s+1})_\loc)^{**}
\loceq \sint T_{s+1} \loceq T_s, $$
so to show that, for $f\in T_s$, the sequence
$(T_{\phi_n}f)$ converges weak-star in $T_s$, 
it suffices to show that
$(\mathfrak{C}T_{\phi_n}f)$ converges weak-star
in $T_{s+1}$. 
We may assume that
$f$ has compact support, since 
$(T_{\phi_n}f)$
depends only on the restriction of $f$
to a neighbourhood of $\spt\phi$, and then
taking $g=\mathfrak{C}f\in T_{s+1}$, 
it suffices to show that
$\mathfrak{C}T_{\phi_n}\dd{g}{\bar z}$ converges
weak-star to zero.  But
$$\mathfrak{C}T_{\phi_n}\dd{g}{\bar z} =
\mathfrak{C}^2 \left(
\phi_n\cdot \frac{\partial^2 g}{\partial \bar z^2}
\right),$$
so we are just dealing with the equivalent
of $\Vit_\phi$ for the d-bar-squared operator
instead of the d-bar operator, so it is
bounded on $\Lip\alpha$ and on $C^0$, 
independently of $n$, and thus we
have the desired weak-star convergence. 

\begin{remark}
We expect that the argument of Subsection \ref{SS:pf-L-2}
may be used more generally, i.e
we \emph{conjecture} the following:

\textsl{Let $F$ be a small SCBS, such that $F^*$ is also small,
$F^{**}\locin C^0$, and the span
of the point evaluations is dense in $F^*$,
and $F^{**}$ has the strong module property.
Then whenever $(\phi_n)$ is a standard pincher,
and $L$ is an elliptic operator with smooth
coefficients, 
$$ L^{-1} (\phi_n\cdot Lf) \to 0 \textup{ weak-star }
\ \forall f\in F^{**}. $$
}%\end textsl

Here, $L^{-1}$ denotes some suitably-chosen parametrix for $L$.
\end{remark}

\section{Proofs of Theorems}

\subsection{Proof of Theorem \ref{T:1}}

We fix $\beta\in(0,1)$ and $s=\beta-1$,
and without loss in generality we assume 
that the boundary point $b=0$.

\medskip
First, consider the \lq only if' direction.
Suppose the series diverges:
$$ \sum_{n=1}^\infty 2^n M^\beta_*(A_n\setminus U)=+\infty.$$
We wish to show that there exist 
$f\in A^s_0(U)$ having $\|f\|_s\le1$ and
$|f(0)|$ arbitrarily large.

Since $M^{\beta}_*$ is subadditive, there exists 
at least one of the four right-angle sectors
$$ S_r:= \left\{z\in\C: |\textup{arg}(i^rz)|<\frac{\pi}4\right\} $$
(for $r\in\{0,1,2,3\}$)
such that 
$$ \sum_{n=1}^\infty 2^n M^\beta_*((S_r\cap A_n)\setminus U)=+\infty.$$
We may assume that this happens for $r=0$,
and we may assume further that $U$ contains
the whole complement of $S_0$ and the whole
exterior of the unit disc.  So we may select
closed sets $E_n\subset S_0\cap A_n$ such that
$U\cap E_n=\emptyset$ and
$$ \sum_{n=1}^\infty 2^n M^\beta_*(E_n)=+\infty.$$
We may select numbers $\lambda_n>0$ such that the individual terms
\\$\lambda_n 2^n M^\beta_*(E_n)\le 1$, and yet
$$ \sum_{n=1}^\infty \lambda_n 2^n M^\beta_*(E_n)=+\infty.$$

For each $n$, by Frostman's Lemma, we may select a
positive Radon measure supported on $E_n$ such that
(1) $\mu_n(\B(a,r))\le r^\beta$ for all $a\in\C$
and all $r>0$ (i.e. $\mu_n$
\lq has growth $\beta$\rq), (2) the total variation $\|\mu_n\|\ge
K\cdot M^\beta_*(E_n)$, and (3)
$\mu_n(\B(a,r))/r^\beta\to0$ uniformly in
$a$ as $r\downarrow0$.  Then taking $r\mapsto h(r)$ to be the
upper concave envelope of $r\mapsto\sup_a 
\mu_n(\B(a,r)$ on $[0,+\infty)$,
we have $\|\mu_n\|\le M_h(E_n) \le M^\beta_*(E_n)$, so 
$\lambda_n2^n\|\mu_n\|\le1$ and
$$ \sum_{n=1}^\infty \lambda_n 2^n \|\mu_n\|=+\infty.$$

Let $h_n:=\lambda_n\Cau(\mu_n)$. Then $h_n\in C_s$,
$h_n$ is holomorphic off $\spt(\mu_n)$, hence
$h_n\in A^s_0(U)$.  Also $\Re(h_n(0))\ge \lambda_n\frac{2^n}{\sqrt2}\|\mu_n\|$.
Hence 
$$ \left|\sum_{n=1}^N h_n(0)\right| \ge
\frac1{\sqrt2} \sum_{n=1}^N \lambda_n 2^n \|\mu_n\| \to +\infty $$
as $N\uparrow\infty$.  So now it suffices to show
that $f_N:= \sum_{n=1}^N h_n$ is bounded in
$T_s$ norm, independently of $N\in\N$.

For this, it suffices to show that the $\ddbzs$-derivatives
$$g_N:= \ddbz f_N  = \sum_{n=1}^N \lambda_n\mu_n $$
are bounded in $T_{s-1}=T_{\beta-2}$, i.e. that for some $K>0$ we have
$$  \sum_{n=1}^N \frac{\lambda_n}{\pi} \cdot
\int\frac{t\,d\mu_n(\zeta)}{
(t^2+ |z-\zeta|^2)^{\frac32}}
\le K t^{\beta-2},$$
whenever $z\in\C$ and $t>0$.

When $t\ge1$, we have the trivial estimate (independent
of $z$)
$$\lambda_n(P_t*\mu_n)(z,t)\le\frac{\lambda_n}{\pi}
\frac{M^{\beta}_*(E_n)}{t^2} \le \frac1{\pi 2^nt^2}, $$
so this gives $|P_t*g_N| \le K t^{-2}\le K t^{\beta-2}$.  

So to finish, fix $t\in(0,1)$, and choose
$m\in\N$ such that $2^{-m-1}\le t\le2^{-m}$,
take the $n$-th term in the sum,
and consider separately the ranges of $n$:
\\
case $1^\circ$: $n>m-2$, and case 
$2^\circ$: $n<m-2$,
\\ and the possible positions of $z$ in relation to
$A_n$.  

\smallskip\noindent Case $1^\circ$:
\\The trivial estimate also gives
$$   
\lambda_n\cdot(P_t*\mu_n)(z,t)
\le
\frac{M^\beta(A_n)}{\pi t^2}
\le 
\frac{(2^{-n})^\beta}{\pi t^2},
$$
so we get an estimate for the total
contribution from all the Case $1^\circ$ terms:
$$ t^{2-\beta}\sum_{n=m-2}^\infty 
\lambda_n\cdot(P_t*\mu_n)(z,t)
\le
\frac1{\pi}
\sum_{n=m-2}^\infty
2^{(m+1-n)\beta} = \frac{8^\beta}{\pi(1-2^{-\beta})},
$$

\smallskip\noindent
Case $2^\circ$:
\\ To deal with this we have to consider the position
of $z$ in relation to $A_n$.

There are at most three $n$ such that the distance
from $z$ to $A_n$ is less than $2^{-n-1}$. For these
we can use the uniform estimate 
$$t^{2-\beta}\cdot (P_t*\mu_n)(z,t) \le K, $$
which follows from the fact that $\mu_n$ has growth $\beta$.
(just write the value of $P_t*\mu_n(w)$ as a sum
of the integrals over the annuli
$$ \{z\in\C: 2^{m}t<|z-w|\le2^{m+1}t \}$$
from $0$ to $-\log_2t$ plus the integral
over the disc $\B(w,t)$).

For the remaining $n\in\{1,\ldots,m-3\}$, the estimate
$$ (P_t*\mu_n)(z,t) \le \frac{t\cdot M^\beta(A_n)}{\pi\cdot \dist(z,A_n)^3} $$
gives 
$$t^{2-\beta}\cdot (P_t*\mu_n)(z,t) \le (2^{\beta-3})^{m+1-n}, $$
so 
$$ t^{2-\beta}\sum_{n=1}^{m-3} 
\lambda_n\cdot(P_t*\mu_n)(z,t)
\le
3K+\sum_{n=1}^{m-3} 
(2^{\beta-3})^{m+1-n} = K, $$
another constant (depending on $\beta$), 
and we are done.

\medskip
Now consider the converse. Suppose
$\sum_n 2^n M^\beta_*(A_n\setminus U)<+\infty$.  We
want to show that $A^s(U)$ admits a continuous point evaluation at $0$.

If $V$ is an open subset of $U$, then
$A^s(U)$ is a subset of $A^s(V)$, so it 
suffices to prove the result for $U$
that are contained in $\mathring{\B}(0,\half)$.
We assume this is the case. 

We may choose radial functions $\psi_n\in\Test$
such that $\psi_n=1$ on $A_n$, $\psi_n=0$
off $A_{n-1}\cup A_n\cup A_{n+1}$, and
for each $k$ the sequence $(N_k(\psi_n))_n$
is bounded.  Let 
$$ \phi_n:= \frac{\psi_n}{\sum_{m=1}^\infty \psi_m}$$
on the complement of $\{0\}$, and $\phi_n(0)=0$. 
Then each $\phi_n\in\Test$, is zero 
off $A_{n-1}\cup A_n\cup A_{n+1}$,
the sequences $(N_k(\phi_n))_n$ are all bounded,
and $\sum_n\phi_n=1$ on the union of all
the $A_n$.

Fix a test function $\chi$ that equals $1$ on
$\B(0,\half)$ and is supported on $\B(0,1)$.

Fix $f\in A^s_0(U)$. 
We want to prove that $|f(0)|\le K\|f\|_s$, where $K>0$
does not depend on $f$.

We have $f(0)=(\chi\cdot f)(0)$, $\chi\cdot f\in A^s(U)$, and 
$\|\chi\cdot f\|_s\le K\|f\|_s$,  so it suffices to prove 
the estimate for $f\in A^s_0(U)$ having support in $\B(0,1)$.

Choose $N\in\N$ such that $f(z)$ is holomorphic
for $|z|<2^{2-N}$.  Define $\phi_0(z)$ to be
$1-\phi_N(z)$ when $|z|<2^{-N-1}$
and $0$ otherwise. Then $\phi_0\in\Test$,
$N_k(\phi_0)=N_k(\phi_N)$, and
 the test function
 $$ \phi:= \phi_0 + \sum_{n=1}^N \phi_n $$
is equal to $1$ near $\B(0,1)$. 

We have 
$$ f = \phi\cdot f = \Cau\left(\ddbz(\phi\cdot f)\right). $$
Since $\ddbz\phi=0$ on the support of $f$,
this equals
$$ \Cau\left(\phi\dd{f}{\bar z}\right)
= \Cau\left(\phi_0\dd{f}{\bar z}\right) + 
\sum_{n=1}^N \Cau\left(\phi_n\dd{f}{\bar z}\right)
= \sum_{n=1}^N \Cau(\phi_n\dd{f}{\bar z}),
$$ 
since $f$ is holomorphic on $\spt(\phi_0)$.

Take a test function $\psi$ 
that equals $1/z$
for $2^{-N}<|z|<2$. 

Applying Lemma \ref{L:B}, we have
$$ 
\Cau\left(\phi_n\cdot\dd{f}{\bar z}\right)(0) 
= -\left\langle \frac{\psi}{\pi}, \phi_n\cdot\dd{f}{\bar z}\right\rangle
= -\left\langle \frac{\psi\cdot\phi_n}{\pi}, \dd{f}{\bar z}\right\rangle.
$$

Thus
$$ f(0) = 
-\sum_{n=1}^N
 \left\langle \frac{\phi_n}{\pi z}, \dd{f}{\bar z}\right\rangle
$$
(Here, by $\phi_n/z$ we understand the test function
that equals $0$ at the origin and $\phi_n/z$
everywhere else in $\C$.)

Applying the Hausdorff content estimate from Lemma
\ref{L:M-beta-*}, we have
$$
\begin{array}{rcl}
 |f(0)| &\le& 
\dsty K\cdot \sum_{n=1}^N
N_3\left( 
\frac{\phi_n}{z}
\right)
\cdot
M^\beta_*\left(\spt(\phi_n\cdot \dd{f}{\bar z})\right) 
\cdot\left\|\dd{f}{\bar z}\right\|_{s-1} 
\\
&\le&
\dsty 
K\cdot \sum_{n=1}^{\infty}
2^n M^\beta_*\left((A_{n-1}\cup A_n\cup A_{n+1})\setminus U\right)
\cdot\left\|\dd{f}{\bar z}\right\|_{s-1}, 
\end{array}
$$ 
since 
$N_3(\phi_n/z) \le 2^{n+1}N_3(\phi_n)\le K2^n$.

Since $M^\beta_*$ is subadditive and 
$\dsty\left\|\dd{f}{\bar z}\right\|_{s-1}
\le K\|f\|_s\le K$, we get
$$ | f(0) | \le  
K\cdot \sum_{n=1}^{\infty}
2^n M^\beta_*(A_n\setminus U)\cdot \|f\|_s. 
$$

This completes the proof.

\begin{remark}
The proof actually shows that the
sum of the series is the dual norm of the point evaluation
$f\mapsto f(b)$, up to multiplicative constants
that depend only on $\beta$.
\end{remark}

\subsection{Proof of Theorems \ref{T:2},
\ref{T:3} and \ref{T:4}}
To prove Theorem \ref{T:2}, one can use exactly the same
argument, just replacing $M^\beta_*$ by $M^\beta$,
and using Lemma \ref{L:M-beta}
instead of Lemma \ref{L:M-beta-*}.

To prove the other two theorems, one just uses
the corresponding Cauchy-Pompeiu formulas for derivatives:
$$
\Cau(\mu_n)^{(k)}(0) = 
\frac{k!}{\pi}\int \frac{d\mu_n(z)}{z^{k+1}}
$$
for the \lq only if' direction, and
$$ f^{(k)}(0) = 
-\sum_{n=1}^N
 \left\langle \frac{k!\phi_n}{\pi z^{k+1}}, \dd{f}{\bar z}\right\rangle
$$
for the \lq if' direction.

\subsection{Proof of Theorem \ref{T:5}}
The point is that
a distribution $f\in C_s$ satisfies
$\Delta f=0$ on the open set $U$
if and only if $\dd{f}{z}$ is holomorphic
on $U$, and the operator $\dd{}{z}$
maps $T_s$ into $T_{s-1}$,
and is inverted on $(T_{s-1})_\cs$ by the
\lq anti-Cauchy' transform. So the results
are just reformulations of Theorem \ref{T:1}.

\begin{remark}
This is an example of 
$1$-reduction, and one could also formulate
equivalent results about other elliptic operators.
In particular, $M^\beta_*$ is also the
capacity for $T_{\beta}$ and the operator
$\left(\ddbzs\right)^2$, which is associated to complex
elastic potentials, and it is the capacity
for $T_{\beta+2}$ (a space of functions that
are twice differentiable, but may have discontinuities
in the third derivative) and the operator
$\Delta^2$, associated to elastic plates.
\end{remark}

\section{Concluding remarks}

\subsection{}
In \cite[p.311]{Carleson-1950}
Carleson proved that for $0<\beta<1$
the $M^\beta$-null sets are the removable
singularities for the class
of $\Lip\beta$ \lq\lq multiple-valued holomorphic
functions having single-valued real part."
The expression in quotation marks is really code for
\lq\lq harmonic functions", so this is really the
first version of the fact that $M^\beta$ is the 
$\Delta$-$\Lip\beta$-$\cp$.  

In the same paper, Carleson proved a precursor to
Dolzhenko's theorem \cite{Dolzhenko} about 
removable singularities for $\Lip\beta$ holomorphic 
functions.  He left a little gap, between the Hausdorff
content and the nearby Riesz capacity, and Dolzhenko
closed the gap.

\subsection{Conjecture}
Recently \cite{OF-Sbornik} the author showed
that the existence of a continuous point
derivation on $A^s(U)$ at $b$, for some positive 
$s<1$, implies that the value of the derivation
may be calculated by taking limits of difference
quotients from 
a subset $E\subset U$ having full area
density at $b$.  In case $U$ also satisfies
an interior cone condition at $b$, 
the value may be calculated by taking
limits along the midline of the cone.  
It seems reasonable to hope that
for negative $s$, if $A^s(U)$ admits a continuous
point evaluation at $b$, then the value can
be calculated in a similar way, as
$$ \lim_{z\to b, z\in E} f(z) $$
for some $E\subset U$ having full area density at
$b$, and for segments $E\subset U$
(if any) along which nontangential approach to $b$
is possible.  In the case of $L^p$ spaces, results
along these lines have also been obtained
by Wolf \cite{Wolf1979} and Deterding \cite{Deterding1,
Deterding2, DeterdingThesis}.
See also \cite{Fernstrom1975, Wolf1978, McCarthyYang}.

\subsection{Question}
Suppose $F$ is an SCBS on $\R^d$ having the
strong module property
$$ \|\phi\cdot f\|_F \le K\cdot N_k(\phi)\cdot\|f\|_F,
\quad \forall \phi\in\mathcal{D}\ \forall f\in F,$$
for some positive constant $K$ and some nonnegative
integer $k$.
Define an inner capacity $c_{F,k}$ by 
the rule that for each compact $E\subset\R^d$ the value
$c_{F,k}(E)$ is the least nonnegative number $c$ such that
$$ |\langle\phi,f\rangle| \le 
N_k(\phi)\cdot\|f\|_F\cdot c
$$ 
whenever $\phi\in\mathcal{D}$, $f\in F$, and
$\spt(\phi\cdot f)\subset E$.
For example, if $F=L^\infty$, it is easy to see that
$c_{F,0}(E)$ is the $d$-dimensional Lebesgue measure of $E$,
whereas for $F=L^1$, $c_{F,0}(E)=1$ for all $E$.

The question is this:
\emph{For which $F$ and $k$ is it the case that 
\\$c_{F,k}\le K\cdot(1$-$F$-$\cp)$
for some constant $K$?} 

Recall that for compact $E\subset\R^d$,
$$ \textup{$1$-$F$-$\cp$}(E) := \inf\{|\langle\chi,f\rangle|:
\|f\|_F\le 1, \spt(f)\subset E \}, $$
where $\chi\in\mathcal{D}$ is any fixed test function
such that $\chi=1$ on $E$.

We have seen that this holds for $F=T_s$, $s\in\R$.  Does
it hold for all SCBS having the strong module property?

\subsection{Question}
If an SCBS $F$ has the order $k$ strong module property, when is there
an SCBS locally-equal to $DF$ that has the strong
order $k+1$ module property?  And what about
$\sint F$?

$DF_\loc$ is the Frechet space topologised by the 
seminorms defined by
$$ \|f\|_n = \inf\left\{
\|g_1+\cdots+g_d\|_{F(\B(0,n))}: 
g_j\in F, f=\dd{g_1}{x_1}+\cdots+\dd{g_d}{x_d}
\right\}.
$$

Note that if $d=2$, and $F$ is weakly-locally invariant under
Calderon-Zygmund operators (or just under the Beurling transform),
then $DF_\loc$ is topologised by the
seminorms
$$ \|f\|_n = \inf\left\{
\|g\|_{F(\B(0,n))}: 
g\in F_\cs, f=\dd{g}{\bar z}
\right\},
$$
and this implies that each $\|\phi\cdot f\|_n$
is dominated by $N_{k+1}(\phi)\cdot \|f\|_n$,
because
$$ \phi\cdot \dd{g}{\bar z} = \ddbz{(\phi\cdot g)}
- \left(\dd{\phi}{\bar z}\right)\cdot g.$$
This property is a kind of local version of
the strong module property.

\subsection{Acknowledgment} The author is grateful to the
referee for a careful reading of the typescript and
for corrections and suggestions that  materially improved the 
exposition.

%    Bibliographies can be prepared with BibTeX using amsplain,
%    amsalpha, or (for "historical" overviews) natbib style.
\bibliographystyle{amsplain}

\begin{thebibliography}{11}
\bibitem{Gardiner} D. Armitage and S. Gardiner. Classical Potential Theory. Springer. New York. 2001.
\bibitem{Bagby} T. Bagby. Quasi topologies and rational approximations.
J Functional Analysis 10 (1972) 259-268.
\bibitem{Brennan1} J. Brennan. Invariant subspaces and rational approximation.
J Functional Analysis 7 (1971) 879-881.
\bibitem{Brennan2} J. Brennan. Point evaluations and invariant subspaces.
Indiana Univ Math J 20 (1971) 285-310.
\bibitem{Brennan3} J. Brennan. Bounded point derivations on certain
function spaces.  Preprint 2018. Algebra i Analiz, to appear.
\bibitem{Carleson-1950} L. Carleson. On null-sets for continuous
analytic functions. Arkiv f\"or Math 1 (1950) 311-318.
\bibitem{Carleson} L. Carleson. Selected Problems on Exceptional Sets. 
Van Nostrand Mathematical Studies no. 13. 1967.
\bibitem{Dahlberg} B. Dahlberg. On the exceptional sets at the boundary for
subharmonic functions. Ark Mat  15 (1977) 305-312.
\bibitem{Davie} A. Davie. Analytic capacity and approximation problems.
Transaction AMS 171 (1972) 409-444.
\bibitem{DeterdingThesis} S. Deterding. Bounded point derivations on certain function spaces. Thesis, University of Kentucky. 2018. 
%https://orcid.org/0000-0003-2498-0245. https://uknowledge.uky.edu/cgi/viewcontent.cgi?article=1055&context=math_etds Accessed 5 Feb 2019.
\bibitem{Deterding1} S. Deterding. A formula for a bounded point 
derivation on $R^p(X)$. Preprint 2017. arXiv:1709.04050. 
\bibitem{Deterding2} S. Deterding. Bounded point derivations
on $R^p(X)$ and approximate derivatives. Preprint 2017. arXiv
1709.02851. Math Scand, to appear.
\bibitem{Dolzhenko} E. Dolzhenko. The removability of singularities
of analytic functions (Russian). 
%( О~``стирании'' особенностей аналитических функций)
Uspekhi Mat Nauk 18:4(112) (1963)  135-142.
\bibitem{Fernstrom1975} C. Fernstr\"om. Bounded point evaluations
and approximation in $L^p$ by analytic functions.
Lecture Notes in Math. 512, Springer, Berlin, 1976, 65-68. 
\bibitem{FernstromPolking} C. Fernstr\"om and J. Polking. Bounded point
evaluations and approximation in $L^p$ by
solutions of elliptic partial differential equations.
J. Functional Analysis 28 (1978) 1-20.
\bibitem{Fuglede} B. Fuglede. Extremal length and functional completion.
Acta Math 98 (1957) 171-219.
\bibitem{Gamelin} T. Gamelin. Uniform Algebras. Prentice-Hall. 1969.
\bibitem{GG} T. Gamelin and J. Garnett. Distinguished homomorphisms
and fiber algebras. Amer J Math 92 (1970) 455-474.
\bibitem{Garnett} J. Garnett. Analytic Capacity and Measure. LNM 197.
Springer. 1972.
\bibitem{Hallstrom} A. Hallstrom. On bounded point derivations 
and analytic capacity. J Functional Analysis 4 (1969) 153-165.
\bibitem{Hedberg-bpd} L. Hedberg. Bounded point evaluations and capacity. J Functional Analysis 10 (1972) 269-280.
\bibitem{Hedberg-approx} L. Hedberg. Approximation in the mean by analytic
functions. Transactions AMS 163 (1972) 157-171.
\bibitem{Kaufmann} R. Kaufmann. Hausdorff measures, BMO, and analytic 
functions. Pacific J Math 102 (1982) 369-171.
\bibitem{Keldysh} M. Keldysh. On the solution and structure of Dirichlet's
problem (Russian). Uspekhi Mat Nauk 8 (1941) 171-231.
\bibitem{Kolsrud} T. Kolsrud. Bounded point evauations and balayage.
Ark Mat 20 (1982) 137–146.
\bibitem{OF-Lord} D. Lord and A. O'Farrell. Boundary smoothness 
properties of Lip$\alpha$ analytic functions.
J Analyse Math (Jerusalem) 63 (1994) 103-119. 
\bibitem{McCarthyYang} J. McCarthy and L. Yang. Bounded point
evaluations on the boundaries of L regions. Indiana
Univ Math J 43 (1994) 857-883.
\bibitem{Melnikov} M. Mel'nikov. A bound for the Cauchy integral along an
analytic curve. Mat Sbornik 71 (113) (1966) 503-515. 
\bibitem{OF-Vitushkin} A. O'Farrell. Hausdorff content and rational
approximation in fractional Lipschitz norms.
Transactions AMS 228 (1977) 187-206.
\bibitem{OF-thesis} A. O'Farrell. Capacities in Uniform Approximation. Thesis. Brown University. 1973.
\bibitem{OF-spikes} A. O'Farrell. Analytic capacity, H\"older conditions and $\tau$-spikes. Transactions AMS 196 (1974) 415-424. 
\bibitem{OF-wrap} A. O'Farrell. Analytic capacity and equicontinuity.
Bull London Math Soc 10 (1978) 276-279.
\bibitem{OF-SCS-notes} 
A. O'Farrell. 
Symmetric Concrete Spaces. Maynooth College. 1993.
\bibitem{OF-order} 
A. O'Farrell. 
The order of a symmetric conrete space. 
Proceedings RIA 88A (1988) 39-48.
\bibitem{OF-1-reduction} 
A. O'Farrell. 
The 1-reduction for removable singularities, and
negative H\"older spaces. 
Proceedings RIA 88A (1988) 133-151.
\bibitem{OF-T-invariance}
A. O'Farrell. 
T-invariance. 
Proceedings RIA 92A (1992) 185-203.
\bibitem{OF-Sbornik} A. O'Farrell. 
%\begin{otherlanguage}{russian}
%Производные на границе для аналитических липшицевых функций. Матем. сб. 207:10 (2016), 119-140
%\end{otherlanguage} =  
Derivatives at the boundary for analytic Lipschitz functions (Russian:)
Mat Sbornik 207:10 (2016) 119-140. (English:) Sb Math 207:10 (2016), 1471-1490.
\bibitem{Stein} E. Stein. Singular Integrals and Differentiability
Properties of Functions. Princeton University Press. 1970. 
\bibitem{Tarkhanov} N. Tarkhanov. The Cauchy Problem for
Solutions of Elliptic Equations. Akademie Verlag. 1995. ISBN 3-05-501663-7.
\bibitem{Verdera} J. Verdera. BMO rational approximation and one-dimensional Hausdorff content. Transactions AMS 297 (1986) 283-304.
\bibitem{Wang} J. Wang. An approximate Taylor's theorem for R(X).
Math. Scand. 33 (1973) 343-358.
\bibitem{Wiener} N. Wiener. The Dirichlet Problem. MIT J Math Phys 3
(1924) 127-146.
\bibitem{Wolf1978} E. Wolf. Bounded point evaluations and
smoothness properties of functions in $R^p(X)$.
Transactions AMS 238 (1978) 71-88.
\bibitem{Wolf1979} E. Wolf. Smoothness properties of functions in
$R^2(X)$ at certain boundary points. Internat J Math \& Math Sci 2 
(1979) 415-426.
\end{thebibliography}
%    Insert the bibliography data here.

\end{document}